\newcommand{\hide}[1]{}
\numberwithin{equation}{section}
\def\spa{\mathop{\rm span}}
\def\conf{
          \mathop{{\rm Conf}_2(\hat{\C})}}
\newcommand{\D}{\mathbb D}
\newcommand{\R}{\mathbb R}
\newcommand{\C}{\mathbb C}
\newcommand{\A}{\mathcal A}
\newcommand{\Aut}{{\sf Aut}(\mathbb D)}
\newcommand{\Rot}{{\sf Rot}}
\newcommand{\N}{\mathbb N}
\def\id{{\sf id}}
\def\Aut{{\sf Aut}}
\def\1#1{\overline{#1}}
\def\2#1{\widetilde{#1}}
\def\3#1{\widehat{#1}}
\def\4#1{\mathbb{#1}}
\def\5#1{\frak{#1}}
\def\6#1{{\mathcal{#1}}}
\DeclareMathOperator{\Rudin}{R}
\newcommand{\mcite}[1]{\csname b@#1\endcsname}
\newtheoremstyle{break}
  {8pt}{8pt}%
  {\itshape}{}%
  {\bfseries}{}%
  {\newline}{}%
\theoremstyle{break}
\theoremstyle{theorem}
\def\id{{\sf id}}
\def\Aut{{\sf Aut}}
\newcommand{\at}[2][\big]{#1\vert_{#2}}
\newcommand{\argument}{\,\cdot\,}
\DeclarePairedDelimiter{\abs}{\lvert}{\rvert}
\DeclarePairedDelimiter{\norm}{\lVert}{\rVert}
\newcommand{\del}{\mathop{}\!\partial}
\newcommand{\Schwarzian}{\operatorname{S}}
\def\blfootnote{\xdef\@thefnmark{}\@footnotetext}
\theoremstyle{break}
\newtheorem{theorem}{Theorem}[section]
\newtheorem*{theorem*}{Theorem}
\newtheorem{lemma}[theorem]{Lemma}
\newtheorem*{lemma*}{Lemma}
\theoremstyle{break}
\newtheorem{proposition}[theorem]{Proposition}
\newtheorem{corollary}[theorem]{Corollary}
\theoremstyle{break}
\newtheorem{definition}[theorem]{Definition}
\theoremstyle{break}
\newtheorem{remark}[theorem]{Remark}
\numberwithin{equation}{section}
\newcommand*\pFq[6][8]{%
	\begingroup 
	\pFqmuskip=#1mu\relax
	\mathchardef\normalcomma=\mathcode`,
	\mathcode`\,=\string"8000
	\begingroup\lccode`\~=`\,
	\lowercase{\endgroup\let~}\pFqcomma
	{}_{#2}F_{#3}{\left[\genfrac..{0pt}{}{#4}{#5};#6\right]}%
	\endgroup
}
\newcommand{\pFqcomma}{{\normalcomma}\mskip\pFqmuskip}
\newcommand*\FF[3][8]{%
	\begingroup 
	\pFqmuskip=#1mu\relax
	\mathchardef\normalcomma=\mathcode`,
	\mathcode`\,=\string"8000
	\begingroup\lccode`\~=`\,
	\lowercase{\endgroup\let~}\pFqcomma
	{}_{3}F_{2}{\left[\genfrac..{0pt}{}{#2}{#3}\right]}%
	\endgroup
}
\newenvironment{mylist}{\begin{list}{}%
{\labelwidth=2em\leftmargin=\labelwidth\itemsep=.4ex plus.1ex minus.1ex\topsep=.7ex plus.3ex minus.2ex}%
\let\itm=\item\def\item[##1]{\itm[{\rm ##1}]}}{\end{list}}
\newglossaryentry{C^*}{name=\ensuremath{\C^*},
                       description={\ensuremath{\C \setminus\{0\}}}}
\newglossaryentry{hatC^*}{name=\ensuremath{\hat{\C}^*},
                          description={\ensuremath{\hat{\C} \setminus\{0\}}}}
\newglossaryentry{D^{m,n}}{name=\ensuremath{D^{m,n}},
                           description={Peschl--Minda operator}}
\newglossaryentry{D_z^n}{name=\ensuremath{D_z^n},
                          description={pure Peschl--Minda operator}}
\newglossaryentry{D_w^n}{name=\ensuremath{D_w^n},
                           description={pure Peschl--Minda operator}}
\newglossaryentry{D_*^{m,n}}{name=\ensuremath{D_*^{m,n}},description={dual Peschl--Minda operator}}
\newglossaryentry{R}{name=\ensuremath{\Rudin},
                           description={Rudin operator}}
\newglossaryentry{del^{m,n}}{name=\ensuremath{\partial^m_1 \partial^n_2},
                             description={Wirtinger derivatives}}
\newglossaryentry{C^{infty}}{name=\ensuremath{C^{\infty}(U)},
                        description={the set of $C^{\infty}$ functions on $U$}}
\newglossaryentry{SmoothLocal}{name=\ensuremath{C^{\infty}_{\text{loc}}},description={We write $f\in C^{\infty}_{\text{loc}}(V)$ if $f \in C^{\infty}(U)$ for some open subset $U$ of $V$.}}
\newglossaryentry{Hol}{name=\ensuremath{\mathcal{H}(U)},description={the set of holomorphic functions on $U$}}
\newglossaryentry{HolLocal}{name=\ensuremath{\mathcal{H}_{\text{loc}}},description={We write $f\in \mathcal{H}_{\text{loc}}(V)$ if $f \in \mathcal{H}(U)$ for some open subset $U$ of $V$.}}
\newglossaryentry{Delta_{zw}}{name=\ensuremath{\Delta_{zw}},description={Laplacian on $\Omega$}}
\author[M. Heins]{Michael Heins}
\address{M. Heins: Department of Mathematics, University of W\"urzburg, Emil Fischer Strasse 40, 97074, W\"urzburg, Germany.} \email{michael.heins@mathematik.uni-wuerzburg.de}
\author[A.~Moucha]{Annika Moucha$^\S$}
\address{A. Moucha: Department of Mathematics, University of W\"urzburg, Emil Fischer Strasse 40, 97074, W\"urzburg, Germany.} \email{annika.moucha@uni-wuerzburg.de}
\author[O. Roth]{Oliver Roth}
\address{O. Roth: Department of Mathematics, University of W\"urzburg, Emil Fischer Strasse 40, 97074, W\"urzburg, Germany.} \email{roth@mathematik.uni-wuerzburg.de}
\title[Function Theory off the complexified unit circle]{Function Theory off the complexified unit circle:\\[2mm] Fr\'echet space structure and automorphisms}
\thanks{$^\S\,$Partially supported by the Alexander von Humboldt Stiftung}
\begin{document}

\maketitle
\footnotetext[1]{By convention,  $z \cdot \infty=\infty \cdot z=\infty$ for all $z \in \hat{\C}\setminus \{0\}$, while $0 \cdot \infty=\infty \cdot 0=1$.}
\blfootnote{2020 \textit{Mathematics Subject Classification.} Primary 30F45, 53A55, 46A35, 46A04}
\blfootnote{\textit{Key words and phrases.} Schauder basis, Invariant Laplacian, Conformal invariance \hfill{\today}}


\begin{abstract}
Motivated by recent work on strict deformation quantization of the unit disk and the Riemann sphere,
 we study the Fr\'echet space structure of the set of holomorphic functions on the complement $\Omega:=\{(z,w)\in \hat{\mathbb{C}}^2\, :\, z\cdot w\not=1\}$ of the complexified unit circle ${\{(z,w) \in \hat{\mathbb{C}}^2 \, : \, z\cdot w=1\}}$. We also characterize the subgroup of all biholomorphic automorphisms of $\Omega$ which leave the canonical Laplacian on $\Omega$ invariant.
  \end{abstract}

\section{Introduction}

We work on the Riemann sphere $\hat{\C}:=\C \cup \{\infty\}$.
The \textit{complement of the complexified unit circle} is defined\footnotemark[1]
as the open subset
\begin{equation*} \label{eq:defOmega}
\Omega:=\left\{ (z,w) \in \hat{\C}^2 \, : \, z \cdot w\not=1\right\} \,
\end{equation*}
of $\hat{\C}^2$. The Fr\'echet space $\mathcal{H}(\Omega)$ of all holomorphic functions $f : \Omega \to \C$, equipped with its natural  topology of locally uniform convergence, plays a peculiar and rather puzzling  key role in recent work \cite{BW,EspositoSchmittWaldmann2019,KrausRothSchoetzWaldmann,SchmittSchoetz2022} on strict deformation quantization  of the unit disk $\D:={\{z \in \C \, : \, |z|<1\}}$ and the Riemann sphere $\hat{\C}$.
Moreover, it is shown in \cite{HeinsMouchaRoth2, Annika}, that the fine structure of $\mathcal{H}(\Omega)$ also provides an efficient device to investigate spectral properties of the invariant Laplacians on the disk and the sphere (see e.g.~Helgason \cite{Helgason70}, Rudin \cite{Rudin84}) 
 using methods from complex analysis.
 \medskip

 The purpose of the present paper is to analyze the structure of the Fr\'echet space $\mathcal{H}(\Omega)$ and to identify the group of biholomorphic automorphisms of~$\Omega$ which leave the canonical Laplacian of $\Omega$ invariant. In order to place the results of this paper into a broader context, we briefly indicate the role the Fr\'echet space $\mathcal{H}(\Omega)$ is playing in strict deformation quantization as well as for the spectral theory of the invariant Laplacian for the unit disk $\D$ and the sphere~$\hat{\C}$.
\begin{mylist}
\item[(i)] Roughly speaking, in the theory of strict deformation quantization, a star product $\star_{\hbar}$ of Wick--type is an associative, \textit{non--commutative} product or ``deformation'' of the pointwise product of functions defined on some complex manifold~$M$ and depending on a complex parameter~$\hbar$ s.t.~the product $\star_\hbar$ is compatible with the complex structure of $M$, see e.g.~\cite{Waldmann2019} for~a detailed survey.
It turns out, see \cite{BW,EspositoSchmittWaldmann2019,KrausRothSchoetzWaldmann}, that the vector spaces
$$ \mathcal{A}(\hat{\C}):=\left\{\hat{\C} \ni z \mapsto  f(z,-\overline{z}) \, : \, f \in \mathcal{H}(\Omega)\right\}$$
and
$$ \mathcal{A}(\D):=\left\{\D \ni z \mapsto f(z,\overline{z}) \, : \, f \in \mathcal{H}(\Omega)\right\} \, ,$$
which are subspaces of the real analytic functions on $\hat{\C}$ resp. $\D$, are of particular interest. In fact, they can be equipped with star products $\star_{\hbar,\hat{\C}}$ and $\star_{\hbar,\D}$ such that  $(\mathcal{A}(\hat{\C}),\star_{\hbar,\hat{\C}})$ and $(\mathcal{A}(\D),\star_{\hbar,\D})$ are non--commutative \textit{Fr\'echet algebras} w.r.t.~the subspace topology induced from the natural topology of $\mathcal{H}(\Omega)$.
In addition, these star products are holomorphic w.r.t.~$\hbar$, and they are {invariant} w.r.t.~the action of the groups $\text{Rot}(\hat{\C})$ and $\Aut(\D)$ of holomorphic isometries of $\hat{\C}$ resp.~${\D}$.

In \cite{HeinsMouchaRoth1} we go one step further and show that the star products $\star_{\hbar,\D}$ and $\star_{\hbar,\hat{\C}}$ are simply the restrictions of a naturally defined star product $\star_{\hbar}$ on the ``ambient'' Fr\'echet space $\mathcal{H}(\Omega)$.  In particular, $(\mathcal{H}(\Omega),\star_{\hbar})$ becomes a Fr\'echet algebra when equipped with its natural compact--open topology, and the dependence on the parameter $\hbar$ is holomorphic. This gives a natural interpretation of the striking similarities of $\mathcal{A}(\hat{\C})$ and $\mathcal{A}(\D)$ observed in \cite{EspositoSchmittWaldmann2019,SchmittSchoetz2022}. Moreover, the star product $\star_{\hbar}$ on $\mathcal{H}(\Omega)$ is fully invariant under the distinguished subgroup \begin{equation}\label{eq:OmegaAutomorphismIntro}
    \mathcal{M}
    :=
    \bigcup \limits_{ \psi \in \Aut(\hat{\C})}
    \left\{ (z,w) \mapsto
        \left(
    \psi(z),\frac{1}{\psi(1/w)}
    \right)\, , \,
        (z,w) \mapsto
        \left(
    \psi(w),\frac{1}{\psi(1/z)}
    \right) \right\}\,
\end{equation}
of automorphisms of $\Omega$, which corresponds to the full M\"obius group $\Aut(\hat{\C})$ of all M\"obius transformations.

\item[(ii)] While the group $\Aut(\Omega)$ of all biholomorphic automorphisms is exceedingly large, the distinguished subgroup $\mathcal{M}$ is characterized as the group of exactly those automorphisms of $\Omega$ which leave the natural Laplacian on $\Omega$ invariant, see Theorem \ref{thm:Invariance} in Section \ref{sec:Automorphisms} below.

\item[(iii)] The complement of the complexified unit circle has several other equivalent models such as the second configuration space ${\{(z,w) \in \hat{\C}^2 \, : \, w\not=z\}}$ of the Riemann sphere~$\hat{\C}$ or the complex two--sphere $\mathbb{S}^2_\C=\{(z_1,z_2,z_3) \in \C^3 \, : \, z_1^2+z_2^2+z_3^2=1\}$, see Section~\ref{sec:OtherModels} below. By the invariance properties of the star product $\star_{\hbar}$ on $\Omega$, one therefore obtains a star product on $\mathcal{H}(\mathbb{S}^2_\C)$, which is invariant under the group $\text{SO}(3,\C)$ of all complex linear transformations preserving the quadratic form $z_1^2+z_2^2+z_3^2$.
In the complex two--sphere model, the restriction process of $\Omega$ to its ``rotated diagonal'' $\{(z,-\overline{z}) \, : \, z \in \hat{\C}\} \subseteq \Omega$ reveals that, algebraically, the Fr\'echet algebra $\mathcal{A}(\hat{\C})$ is the same as the set of spherical harmonics on the \textit{real} two--sphere. This aspect will be further discussed in the forthcoming paper \cite{Annika} of the second--named author where a coherent theory of complexified spherical harmonics is developed.

\item[(iv)] The star product $\star_{\hbar}$ on $\mathcal{H}(\Omega)$ has an explicit and simple formula in terms of $\mathcal{M}$--invariant differential operators acting on $\mathcal{H}(\Omega)$, see \cite[Sec.~6]{HeinsMouchaRoth1}. These differential operators are natural extensions to $\Omega$ of conformally invariant differential operators acting on $\hat{\C}$ resp.~$\D$ of Peschl--Minda type, which have been continuously studied since the 1950s, see \cite{KS07diff,KS11,Peschl1955,Schippers2007,Sugawa2000}. By restriction to $\hat{\C}$ and $\D$ one obtains explicit formulas for the star products~$\star_{\hbar,\hat{\C}}$ on $\hat{\C}$ and $\star_{\hbar,\D}$ on $\D$ in terms of those Peschl--Minda differential operators, see again \cite[Sec.~6]{HeinsMouchaRoth1}.

\item[(v)] The Fr\'echet space $\mathcal{H}(\Omega)$ has an intriguing structure. It is the direct sum of two complementary subspaces $\mathcal{H}_-(\Omega)$ and $\mathcal{H}_+(\Omega)$ which we call the \textit{past} and the \textit{future} of $\mathcal{H}(\Omega)$, see Theorem \ref{thm:HOmegaDecomposition} below. These subspaces ultimately come from composition operators induced by two biholomorphic maps from $\C^2$ onto the open subsets
\begin{equation*} \label{eq:OmegaDecompositionIntro}
  \Omega_+:=\Omega \setminus \left\{(z,\infty) \, : \, z \in \hat{\C}\right\} \, , \qquad \Omega_-:=\Omega \setminus \left\{ (\infty,w) \, : \, w \in \hat{\C}\right\} \,
\end{equation*}
of $\Omega$, which are obtained from $\Omega$ by removing the horizontal resp.~vertical complex line through the point $(\infty,\infty) \in\Omega$, see Figure \ref{fig:OmegaPM} below. In particular, the Fr\'echet spaces $\mathcal{H}(\Omega_+)$ and $\mathcal{H}(\Omega_-)$ are both isomorphic to $\mathcal{H}(\C^2)$, the Fr\'echet space of all entire functions on~$\C^2$.
Theorem \ref{thm:HOmegaDecomposition2} below shows that $\mathcal{H}(\Omega_+)$ and $\mathcal{H}(\Omega_-)$ completely encode the Fr\'echet space structure of $\mathcal{H}(\Omega)$. As a byproduct, we get (see Corollary~\ref{cor:Schauder}) that the functions
\begin{equation*}\label{eq:SchauderBasisIntro}
 f_{p,q}(z,w)=\frac{z^pw^q}{\left(1-zw\right)^{\max\{p,q\}}}\, ,\quad  p,q\in\N_0  \, ,
\end{equation*}
form a Schauder basis for $\mathcal{H}(\Omega)$.
\item[(vi)] The Fr\'echet spaces $\mathcal{H}(\Omega_+)$ and $\mathcal{H}(\Omega_-)$ also play a crucial role for studying invariant differential operators on $\Omega$ and its projections, the unit disk $\D$ and the Riemann sphere~$\hat{\C}$. In fact, it is shown in \cite{HeinsMouchaRoth1} that $\mathcal{H}(\Omega_+)$ and $\mathcal{H}(\Omega_-)$ are identified as the ``natural'' function spaces on which the Peschl--Minda type differential operators (see (iv) above) operate. Moreover, the Fr\'echet spaces $\mathcal{H}(\Omega_+)$ and $\mathcal{H}(\Omega_-)$ are utilised in \cite{HeinsMouchaRoth2} as basic tools for
a function--theoretic characterization of the ``exceptional'' eigenspaces of the invariant Laplacian on the unit disk, which have been studied by Rudin \cite{Rudin84}.
\item[(vii)] In addition, the second--named author shows in \cite{Annika} that these ``exceptional'' eigenspaces span all of $\mathcal{H}(\Omega)$. In fact, much more is true: there is a ``natural'' Schauder basis for $\mathcal{H}(\Omega)$ consisting of eigenfunctions of the invariant Laplacian on $\Omega$. By restriction to $\hat{\C}$ and $\D$ one obtains explicit Schauder bases of the Fr\'echet algebras $\mathcal{A}(\hat{\C})$ and $\mathcal{A}(\D)$ consisting precisely of the eigenfunctions of the invariant Laplacian on $\hat{\C}$ resp.~the ``exceptional'' eigenfunctions of Rudin \cite{Rudin84} of the invariant Laplacian on $\D$. This provides a geometrically pleasing intrinsic description of the algebras $\mathcal{A}(\hat{\C})$ and $\mathcal{A}(\D)$.
\end{mylist}

In summary, the study of the set of holomorphic functions on  the complement of the com\-plexified unit circle provides a unified framework for investigating  conformally invariant differential operators on the disk and the sphere within their conjecturally natural habitat. In addition, our further investigations in \cite{HeinsMouchaRoth1,HeinsMouchaRoth2,Annika} offer a complex--analysis approach to Wick--type strict deformation quantization and the spectral properties of the invariant Laplacians on the unit disk, the Riemann sphere and the complex two--sphere as well as its various interactions that might otherwise  remain hidden.

\medskip
The paper is organized in the following way. We start with two preliminary sections. In Section \ref{sec:prelim} we briefly discuss  the canonical Laplacian of $\Omega$ and its main properties.  In Section \ref{sec:OtherModels} we construct
 two other very useful,  biholomorphically equivalent models of $\Omega$,
 the second configuration space $\conf=\{(z,w) \in \hat{\C}^2 \, : \, z\not=w\}$ of $\hat{\C}$ and  the complex two--sphere $\mathbb{S}_{\C}^2=\{(z_1,z_2,z_3) \in \C^3 \, : \, z_1^2+z_2^2+z_3^2=1\}$.
 After these preparations, we turn to a discussion of the main results of this paper.
In Section \ref{sec:schauder} we describe in detail the fine structure of the Fr\'echet space $\mathcal{H}(\Omega)$. Section \ref{sec:Automorphisms} is concerned with the group $\Aut(\Omega)$ of all biholomorphic automorphisms of~$\Omega$. In particular, we identify the M\"obius  subgroup $\mathcal{M}$ of $\Aut(\Omega)$ as the set of those biholomorphisms of $\Omega$ which leave the Laplacian of $\Omega$ invariant. In Section \ref{sec:RiemannianMetric} we introduce a canonical holomorphic Riemannian metric on $\Omega$ and study its relation to the M\"obius subgroup~$\mathcal{M}$. 
The proofs of the results of Section \ref{sec:schauder} are given in Section \ref{sec:proof}, while those of Sections~\ref{sec:Automorphisms} and \ref{sec:RiemannianMetric} are presented in Section \ref{sec:proof2}.

\medskip

We finally note that there is a natural higher dimensional analogue for the complement of the complexified unit circle, namely the complement  of the complexified unit ball in $\C^N$,
$$ \Omega_N:=\left\{ (z_1,\ldots, z_N,w_1,\ldots, w_N) \in \hat{\C}^{2N}\, : \, z_1 w_1+\ldots +z_N w_N\not=1\right\} \, ,$$
and one can define a  star product on the unit ball of $\C^N$ by working on $\Omega_N$ instead of $\Omega=\Omega_1$, see  \cite{KrausRothSchoetzWaldmann}. Nevertheless, we have decided to present our work exclusively in the $N=1$ dimensional case. In fact, some of our results easily extend to  the $N$--dimensional situation, essentially without extra work except for adding more variables, while others conjecturally do not. Perhaps  ``this is a feature, not a bug'' \cite[p.~xiv]{Hubbard}.

\section{The invariant Laplacian on $\Omega$} \label{sec:prelim}

The set $\Omega$ is a complex manifold of complex dimension $2$. Its complex structure is given by just two charts:
  \begin{itemize}
  \item[(i)] \textit{Standard chart}
  \begin{equation*}
      \label{eq:ChartStandard}
      \phi_{+} : \Omega \cap( \C\times\C) \to \C^2\, , \qquad \phi_+(u,v)=(u,v) \, .
  \end{equation*}
\item[(ii)] \textit{Flip chart}
  \begin{equation*}
      \phi_- : \Omega \cap (\hat{\C} \setminus\{0\} \times \hat{\C} \setminus \{0\} ) \to \C^2 \, , \qquad \phi_-(u,v):=(1/v,1/u) \, .
  \end{equation*}
\end{itemize}

In local coordinates the \textit{canonical Laplacian} $\Delta_{zw}$ on $\Omega$ is defined by
\begin{equation*}\label{eq:Laplace}
	\gls{Delta_{zw}}
    F(z,w)
    \coloneqq
    4(1-zw)^2
    \del_z \del_w
    F(z,w)
\end{equation*}
for $z,w \in \C$. Here, $\partial_z$ and $\partial_w$ denote the complex Wirtinger derivatives for a function $F$ which is supposed to be twice partially differentiable in the real sense and defined in an open  neighborhood $U \subseteq \Omega$ of $(z,w)$.
We are mostly interested in the case when $F$ belongs to the set $\mathcal{H}(U)$ of  holomorphic functions on $U$.
It is not difficult to check that $\Delta_{zw}$ is a well--defined continuous linear operator acting on $\mathcal{H}(U)$ for every open subset $U$ of $\Omega$. The canonical Laplacian $\Delta_{zw}$ on~$\Omega$ can be viewed as  the complex counterpart of both the hyperbolic Laplacian $\Delta_{\D}$  on the unit disk $\D$ defined by
$$ \Delta_{\D} f(z):=\left(1-|z|^2 \right)^2 \Delta f(z) \, , \quad z=x+i y \in \D \, , $$
as well as of the spherical Laplacian $\Delta_{\hat{\C}}$ on the sphere $\hat{\C}$ defined in local coordinates by
$$ \Delta_{\hat{\C}} f(z):=\left(1+|z|^2 \right)^2 \Delta f(z) \, , \quad z=x+i y \in \C \, .$$
Here,
$$\Delta:=\frac{\partial^2}{\partial x^2}+\frac{\partial^2}{\partial y^2}$$
is the standard Euclidean Laplacian. It is an elementary fact that the hyperbolic Laplacian $\Delta_{\D}$ is invariant under the group $\Aut(\D)$ of all conformal automorphisms $T : \D \to \D$ of $\D$. This means that $\Delta_{\D} (f \circ T)=\Delta_{\D} f \circ T$ for all $T \in \Aut(\D)$ and all $C^2$--functions $f : U \to \D$ defined on some open set $U \subseteq \D$. In a similar way, the spherical Laplacian $\Delta_{\hat{\C}}$ is invariant under the group $\Rot(\hat{\C})$ of holomorphic rigid motions of $\hat{\C}$. Both $\Aut(\D)$ and $\Rot(\hat{\C})$ are subgroups of the group $\Aut(\hat{\C})$ of all M\"obius transformations, that is, the biholomorphic self--maps $\psi : \hat{\C} \to \hat{\C}$. Clearly, every M\"obius transformation $\psi \in \Aut(\hat{\C})$ gives rise to the following two biholomorphic self--maps of~$\Omega$:
$$   (z,w) \mapsto
        \left(
    \psi(z),\frac{1}{\psi(1/w)}
    \right)\, , \quad
        (z,w) \mapsto
        \left(
    \psi(w),\frac{1}{\psi(1/z)}
  \right) \, ,$$
  and the set of all such automorphisms of $\Omega$ is a subgroup of $\Aut(\Omega)$, which we denote by~$\mathcal{M}$. Slightly abusing language, we call $\mathcal{M}$ the \textbf{M\"obius subgroup of $\Aut(\Omega)$} and its elements \textbf{M\"obius automorphisms of $\Omega$}. Just as $\Aut(\D)$ and $\Rot(\hat{\C})$ operate transitively on $\D$ resp.~$\hat{\C}$, the M\"obius subgroup $\mathcal{M}$ acts transitively on $\Omega$. Moreover, the canonical Laplacian $\Delta_{zw}$ is invariant under every $T \in \mathcal{M}$:
  $$ \Delta_{zw} \left(F \circ T \right)=\Delta_{zw} \left(F\right) \circ T $$
  for every $F \in \mathcal{H}(\Omega)$. For this reason we call $\Delta_{zw}$ the \textbf{invariant Laplacian on $\Omega$}.

  \section{Other models of $\Omega$} \label{sec:OtherModels}

While for applications to deformation quantization and to the spectral property of the invariant Laplacian on the unit disk, the set $\Omega$ seems most suitable, some of the proofs become more transparent in the $\conf$--setting, and the connections to spherical harmonics are best discussed in the $\mathbb{S}^2_\C$--model.
We therefore briefly discuss these two other biholomorphically equivalent models for the manifold~$\Omega$.

\subsection{The second configuration space of the Riemann sphere} \label{sub:conf}

The mapping
$$ \mathcal{T} (z,w) :=\left(z, 1/w \right)$$
defines a biholomorphic map from $\Omega$ onto
$$ G=\left\{ (z,w) \in \hat{\C} \, : \, z\not=w\right\} \, , $$
the so--called \textit{second configuration space of the sphere $\hat{\C}$}, see \cite{Coh}.
In the literature, the second configuration space of $\hat{\C}$ is often denoted by $\conf$, but for ease of notation we write $G$ instead of $\conf$. Clearly, $G$ is an open subset of $\hat{\C}^2$ and pathconnected. It is known that $G=\conf$ is homotopy equivalent to $\hat{\C}$, see \cite[Example~2.4 (1)]{Coh}. Thus its fundamental group $\pi_1(G)$ is trivial and $G$ is simply connected.
  Moreover, it is easy to see that every entire function $g : \C \to \C$ gives rise to the following biholomorphic self--map of $G$:
  $$ (z,w) \mapsto \left(z+g\left(\frac{1}{z-w}\right), w+ g\left(\frac{1}{z-w}\right)\right) \, .$$
In particular, the automorphism group $\Aut(G)$ of $G$ is infinite--dimensional.
  By way of the biholomorphic map $\mathcal{T} : \Omega \to G$, these properties of $G$ also hold for $\Omega$. Thus $\Omega$ is simply connected and $\Aut(\Omega)$ is infinite--dimensional.

\subsection{The complex two--sphere} \label{sub:complexsphere}

Another useful model for $\Omega$ is the complex two--sphere
 $$\mathbb{S}_\C^2 = \left\{(z_1,z_2,z_3) \in \C^3 \;\big|\; z_1^2 + z_2^2 + z_3^2 = 1\right\} \, .$$
 In fact, $\Omega$ is biholomorphically equivalent  to $\mathbb{S}_\C^2$
 by way of the biholomorphic map $S \colon \Omega \longrightarrow \mathbb{S}_{\C}^2$,
    \begin{equation*}
        \label{eq:Stereographic}
        S(z,w)
        =
        \begin{cases}\displaystyle
         \left( \frac{z - w}{1 - zw}, -i \frac{z + w}{1 - zw}, -\frac{1 + z w}{1 - z w} \right) & \phantom{ if \quad } (z,w) \in \C^2,\, zw\not=1, \\[4mm]
        \displaystyle  \left(1/z ,i/z, 1 \right) & \text{ if \quad } z \in \hat{\C} \setminus \{0\}, \,w=\infty,\\[4mm]
   \displaystyle      \left( -1/w, i/w,1 \right) & \phantom{ if \quad } w \in \hat{\C} \setminus \{0\}, \,z=\infty.
       \end{cases}
    \end{equation*}
The inverse map of  $S \colon \Omega \longrightarrow \mathbb{S}_{\C}^2$ is given by $\pi \colon \mathbb{S}_{\C}^2 \to \Omega$,
   \begin{equation*}
        \pi(z_1,z_2,z_3)
        =
        \begin{cases}\displaystyle
         \left( \frac{z_1+i z_2}{1-z_3}, - \frac{z_1 -i z_2}{1-z_3} \right) & \phantom{ if \quad } z_3 \not=1,  \\[4mm]
        \displaystyle  \left(1/z_1,\infty \right) & \text{ if \quad } z_3=1, \,z_2=i z_1,\\[4mm]
   \displaystyle      \left( \infty, -1/z_1  \right)& \phantom{ if \quad } z_3=1, \,z_2=-i z_1.
       \end{cases}
     \end{equation*}
     It is natural to call $\pi \colon \mathbb{S}_{\C}^2 \to \Omega$ the ``complex stereographic projection'', since if we denote by $$  \mathbb{S}_{\R}^2 \coloneqq \mathbb{S}^2_{\C} \cap \R^3 = \big\{ (x_1, x_2, x_3) \in \R^3 \;|\; x_1^2 + x_2^2 + x_2^3 = 1\big\}$$
     the Euclidean two--sphere in $\R^3$, then
    $$
    \pi(x_1,x_2,x_3)=\left( \frac{x_1+i x_2}{1-x_3}, -\frac{x_1-i x_2}{1-x_3} \right) \quad \text{ for all } (x_1,x_2,x_3) \in \mathbb{S}_{\R}^2 \, $$
    and
    $$ \mathbb{S}^2_{\R} \to \hat{\C} \, , \qquad (x_1,x_2,x_3) \mapsto  \frac{x_1+i x_2}{1-x_3}$$
is the standard stereographic projection of $\mathbb{S}^2_{\R}$ onto $\hat{\C}$. In particular,
$\pi$ maps $\mathbb{S}_{\R}^2$ onto the ``rotated diagonal'' $\{(z,-\overline{z}) \, : z \in \hat{\C}\}\subseteq \Omega$. Furthermore, if $H_{\R}^2$ denotes the image of the hyperboloid $\{ (x_1,x_2,x_3) \in \R^3 \, : \, -x_1^2-x_2^2+x_3^2=1\}$ under the biholomorphic map $(z_1,z_2,z_3) \mapsto (iz_1,iz_2,z_3)$, then $\pi$ maps the lower  half $\{(x_1,x_2,x_3) \in H_{\R}^2  \, : \,  x_3 \le -1\}$
    onto $\{(z,\overline{z})\} \, : \, z \in \D\}$ and the upper half $\{(x_1,x_2,x_3) \in H_{\R}^2  \, : \,  x_3 \ge 1\}$
    onto $\{(z, \overline{z})\} \, : \, z \in \hat{\C} \setminus \overline{\D}\}$.

\begin{remark}\label{rem:orthogonalmatrices}
On the complex two--sphere $\mathbb{S}^2_{\C}$, the subgroup $\mathcal{M}$ of $\Aut(\Omega)$ defined by (\ref{eq:OmegaAutomorphismIntro})  corresponds to
$$\operatorname{SO}(3,\C):=\left\{R \in \C^{3 \times 3} \, : \, R^T R=I, \, \det R=1\right\}
\, .$$
\end{remark}

\section{The structure  of the Fr\'echet space  $\mathcal{H}(\Omega)$ } \label{sec:schauder}

In this section we describe the fine structure of the Fr\'echet space $\mathcal{H}(\Omega)$. In particular, we
shall see that the functions
\begin{equation}\label{eq:SchauderBasis}
 f_{p,q} : \Omega \to \C \, , \qquad f_{p,q}(z,w)=\frac{z^pw^q}{\left(1-zw\right)^{\max\{p,q\}}}\, ,\quad  p,q\in\N_0  \, ,
\end{equation}
are a Schauder basis for $\mathcal{H}(\Omega)$. As immediate corollaries we obtain that the functions ${z \mapsto f_{p,q}(z,\overline{z})}$ are a Schauder basis for $\A(\D)$ and the functions $z\mapsto f_{p,q}(z,-\overline{z})$ are a Schauder basis for $\A(\hat{\C})$. These corollaries  have already been proven in \cite[Theorem 3.16]{KrausRothSchoetzWaldmann} and \cite[Proposition 6.4]{SchmittSchoetz2022}. Our approach is much more conceptional, and has various useful rami\-fications for studying invariant differential operators  and the spectral theory of the invariant Laplacian on~$\Omega$ for which we refer to \cite{HeinsMouchaRoth1, HeinsMouchaRoth2}. The basic tool is a natural decomposition of $\mathcal{H}(\Omega)$
  into a direct sum of two closed subspaces $\mathcal{H}_+(\Omega)$ and $\mathcal{H}_-(\Omega)$, which are defined as follows. We denote by $\spa M$  the closure of all finite linear combinations of a subset $M$ of a Fr\'echet space~$X$.

\begin{definition}[The past and the future in $\mathcal{H}(\Omega)$] \label{def:pastfuture}
  The closed subspace
  $$ \mathcal{H}_-(\Omega):=\spa \left\{ f_{p,q} \, : \, 0 \le q \le p<\infty\right\} $$
  is called the \textbf{past} in $\mathcal{H}(\Omega)$ and the closed subspace
  $$ \mathcal{H}_+(\Omega):=\spa \left\{ f_{p,q} \, : \, 0 \le p < q<\infty\right\} $$
 is called the \textbf{future} in $\mathcal{H}(\Omega)$.
\end{definition}

\begin{theorem}[Canonical Decomposition of $\mathcal{H}(\Omega)$] \label{thm:HOmegaDecomposition}
$\mathcal{H}(\Omega)$ is the topological direct sum of the past $\mathcal{H}_-(\Omega)$ and the  future $\mathcal{H}_+(\Omega)$,
  \begin{equation} \label{eq:HOmegaDecomposition}
    \mathcal{H}(\Omega)=\mathcal{H}_-(\Omega) \oplus \mathcal{H}_+(\Omega) \, .
  \end{equation}
\end{theorem}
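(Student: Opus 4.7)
The plan is to exploit the natural $S^1$-action $\rho_\theta(z,w) := (e^{i\theta} z,\, e^{-i\theta} w)$ on $\Omega$, which preserves both $\Omega_+$ and $\Omega_-$. I would first fix explicit biholomorphisms
\begin{equation*}
\Psi_+\colon \C^2 \to \Omega_+,\ (u,v)\mapsto (u/(1+uv),\,v), \qquad \Psi_-\colon \C^2 \to \Omega_-,\ (u,v)\mapsto (u,\,v/(1+uv)).
\end{equation*}
The induced Fréchet-space isomorphisms $\mathcal{H}(\Omega_\pm) \cong \mathcal{H}(\C^2)$ carry the monomial Schauder basis $\{u^p v^q\}$ of $\mathcal{H}(\C^2)$ to $\{z^p w^q/(1-zw)^p\}$ on $\Omega_+$ and to $\{z^p w^q/(1-zw)^q\}$ on $\Omega_-$; in particular $f_{p,q}$ coincides with a pullback monomial exactly when $q\leq p$ (in the $\Omega_+$-chart) or when $p<q$ (in the $\Omega_-$-chart). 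In either chart $\rho_\theta$ acts as $(u,v)\mapsto (e^{i\theta}u, e^{-i\theta}v)$, so each $f_{p,q}$ is an $S^1$-eigenfunction of weight $p-q$.

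For $F \in \mathcal{H}(\Omega)$ and $(z,w)\in \Omega$ the function $\zeta \mapsto F(\zeta z, w/\zeta)$ is holomorphic on $\C \setminus \{0\}$ (since $\zeta z \cdot w/\zeta = zw \neq 1$), so it admits a Laurent expansion $F(\zeta z, w/\zeta) = \sum_{n\in \Z} F_n(z,w)\,\zeta^n$ with coefficients $F_n := \frac{1}{2\pi}\int_0^{2\pi} e^{-in\theta} F\circ \rho_\theta\,d\theta \in \mathcal{H}(\Omega)$, and setting $\zeta=1$ yields $F = \sum_n F_n$ pointwise. I would introduce the continuous linear projection
\begin{equation*}
P_-(F)(z,w) := \frac{1}{2\pi i}\oint_{|\zeta|=R}\frac{F(\zeta z, w/\zeta)}{\zeta - 1}\,d\zeta \qquad (R>1),
\end{equation*}
which is independent of $R$; expanding $1/(\zeta-1)$ as a geometric series on $|\zeta|=R$ identifies $P_-(F) = \sum_{n\geq 0} F_n$, and accordingly $P_+(F) := F - P_-(F) = \sum_{n<0} F_n$ is also continuous. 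Collecting the weight-$n$ terms of the Taylor expansion of $F\circ \Psi_+$ for $n\geq 0$ and of $F\circ \Psi_-$ for $n<0$ then produces the candidate series representations
\begin{equation*}
P_-(F) = \sum_{p\geq q} c_{p,q}\,f_{p,q}, \qquad P_+(F) = \sum_{p<q} c'_{p,q}\,f_{p,q},
\end{equation*}
with $c_{p,q}$ and $c'_{p,q}$ the Taylor coefficients of $F\circ \Psi_+$ and $F\circ \Psi_-$ respectively.

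The hardest step will be to show that these series converge locally uniformly on the whole of $\Omega$—not merely on $\Omega_+$ and $\Omega_-$—so that the limits lie in the closed spans $\mathcal{H}_\pm(\Omega)$. On $\Omega_+$ the first series converges by Taylor convergence of the entire function $F\circ\Psi_+$ on $\C^2$. For the remaining points $\Omega \setminus \Omega_+ = \{(z,\infty):z\in \C\setminus\{0\}\}\cup\{(\infty,\infty)\}$ I would pass to the flip chart $(\tilde z,\tilde w) = (1/w,1/z)$, in which a short computation gives
\begin{equation*}
f_{p,q}(z,w) = \tilde z^{p-q}/(\tilde z\tilde w - 1)^p \qquad (q \leq p).
\end{equation*}
On a compact neighborhood of such a boundary point, $\tilde z$ is small while $\tilde z\tilde w - 1$ is bounded away from zero, which yields an estimate $|f_{p,q}(z,w)| \leq \varepsilon^{p-q} C^p$. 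Together with the super-geometric decay $|c_{p,q}|R_1^p R_2^q \to 0$ of Taylor coefficients of any entire function on $\C^2$, the series $\sum_{p\geq q} c_{p,q} f_{p,q}$ converges absolutely and uniformly near every such point. The symmetric argument on $\Omega_-$ handles $P_+(F)$.

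For uniqueness, continuity of each Fourier projection $F\mapsto F_n$ gives the closed-subspace inclusions $\mathcal{H}_-(\Omega) \subseteq \{F : F_n = 0 \text{ for all } n < 0\}$ and $\mathcal{H}_+(\Omega) \subseteq \{F : F_n = 0 \text{ for all } n \geq 0\}$. Hence any $F \in \mathcal{H}_-(\Omega) \cap \mathcal{H}_+(\Omega)$ satisfies $F_n = 0$ for every $n\in\Z$, so $F = \sum_n F_n = 0$. Combined with $P_-(F) + P_+(F) = F$ and the continuity of the projections $P_\pm$, this establishes \eqref{eq:HOmegaDecomposition} as a topological direct sum of closed subspaces.
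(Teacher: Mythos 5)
Your proposal is correct, but it takes a genuinely different route from the paper. The paper never introduces the circle action: it builds the decomposition from the chart isomorphisms $\mathcal{C}_{\Psi_\pm}\colon\mathcal{H}(\C^2)\to\mathcal{H}(\Omega_\pm)$, shows (their Proposition \ref{prop:Extension}, via an estimate in the standard chart splitting a compact set into $\{|w|\le1\}$ and $\{|w|\ge1\}$ and using submultiplicativity with $f_{1,0}$, $f_{1,1}$) that the pullbacks of the monomials with $p\ge q$ resp.\ $p<q$ extend holomorphically to all of $\Omega$, and then transports the Taylor-coefficient projections $\pi_\pm$ from $\mathcal{H}(\C^2)$ to continuous projections $\mathcal{H}(\Omega_\pm)\to\mathcal{H}_\pm(\Omega)$ via the commuting diagrams, which yields surjectivity of $\Phi=\Phi_+\oplus\Phi_-$ and the topological direct sum (and, as a bonus, the explicit isomorphism with $\mathcal{H}(\C^2)$ and the Schauder basis with coefficient formulas). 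You instead define the projections intrinsically through the $\C^*$-action $(z,w)\mapsto(\zeta z,w/\zeta)$ by a Cauchy-type integral, which makes their continuity and the directness of the sum (via vanishing of Fourier modes, which pass to closed spans) essentially automatic; the price is that you still have to do the same core analytic work as the paper, namely proving that $\sum_{p\ge q}c_{p,q}f_{p,q}$ converges locally uniformly on all of $\Omega$ — your flip-chart estimate $|f_{p,q}|\le\varepsilon^{p-q}C^p$ near $\Omega\setminus\Omega_+$ combined with the superexponential decay of entire Taylor coefficients is a correct replacement for the paper's Proposition \ref{prop:ExtensionA}. Two small steps you should make explicit in a write-up: the identification of the contour-integral projection with the series holds a priori only on $\Omega_\pm$ (by rearranging the weight decomposition of the entire pullback) and is transported to all of $\Omega$ by the identity theorem on the connected manifold $\Omega$; and the $\C^*$-orbit map must be checked to stay holomorphically in $\Omega$ also at points involving $0$ and $\infty$ under the paper's multiplication convention. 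Both are routine, so the argument stands; what the paper's approach buys in addition is the explicit Fr\'echet isomorphism with $\mathcal{H}(\C^2)$ used later for Theorem \ref{thm:HOmegaDecomposition2} and Corollary \ref{cor:Schauder}.
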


  The decomposition (\ref{eq:HOmegaDecomposition}) is intimately tied to the  decomposition of the domain $\Omega$,
  $$ \Omega=\Omega_+ \cup \Omega_- \cup \left\{(\infty,\infty)\right\} $$
  into the two subdomains
\begin{equation*} \label{eq:OmegaDecomposition}
  \Omega_+:=\Omega \setminus \left\{(z,\infty) \, : \, z \in \hat{\C}\right\} \, , \qquad \Omega_-:=\Omega \setminus \left\{ (\infty,w) \, : \, w \in \hat{\C}\right\} \, .
  \end{equation*}
  Note that $\Omega_- \cap \Omega_+=\Omega \cap \C^2$ and
  $$ \mathcal{H}(\Omega_-) \cap \mathcal{H}(\Omega_+)=\mathcal{H}(\Omega) \, .$$
  Moreover, it is an immediate consequence of \eqref{eq:SchauderBasis} and Definition \ref{def:pastfuture} that $$\mathcal{H}_\pm(\Omega)\subseteq\mathcal{H}(\Omega_\pm)\, ,$$
  and the inclusion is strict. The subdomains $\Omega_+$ and $\Omega_-$ of $\Omega$ are visualized in Figure~\ref{fig:OmegaPM}. The edges of each square represent points near infinity. If the edge belongs to the domain, it is dashed. The blue dots correspond to boundary points. Note that points on opposite edges and in particular the four corners are identified.
  \begin{figure}[h]
        \begin{minipage}{6\textwidth/19}
                \centering
                \includegraphics[width = \textwidth]{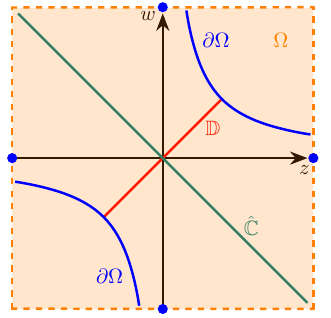}
        \end{minipage}
        \begin{minipage}{6\textwidth/19}
            \centering
            \includegraphics[width = \textwidth]{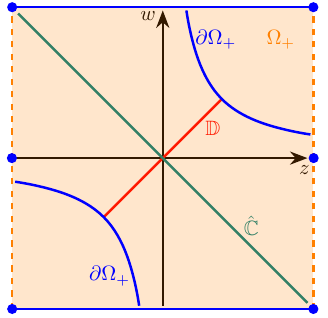}
        \end{minipage}
    \begin{minipage}{6\textwidth/19}
        \centering
        \includegraphics[width = \textwidth]{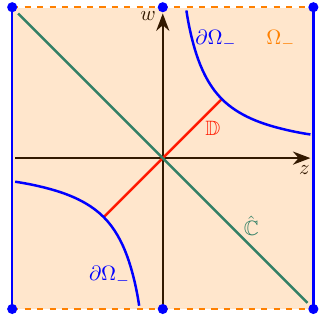}
    \end{minipage}
        \caption{Schematic picture of the domains $\Omega$ (left), $\Omega_+$ (center) and $\Omega_-$ (right) with points at infinity.}
        \label{fig:OmegaPM}
    \end{figure}

 The following proposition provides the first step towards the proof of Theorem \ref{thm:HOmegaDecomposition}. It shows that  $\Omega_+$ and $\Omega_-$ are  simply connected subdomains of $\Omega$ which can be mapped biholomorphically onto  $\C^2$ by means of the elementary maps
\begin{align*}
  & \Psi_{+} : \Omega_{+} \to \C^2 \, ,  \quad \Psi_+(z,w)=\left(\frac{z}{1-zw},w \right) \, , \\
  &  \Psi_{-} : \Omega_{-} \to \C^2 \, ,  \quad \Psi_-(z,w)=\left(z,\frac{w}{1-zw} \right) \, .
\end{align*}
In terms of the induced  composition operators
\begin{align} \label{eq:C+}
  \mathcal{C}_{\Psi_+} & : \mathcal{H}(\C^2) \to \mathcal{H}(\Omega_+) \, , \qquad \mathcal{C}_{\Psi_+} (F):=F \circ \Psi_+ \, , \\ \label{eq:C-}
   \mathcal{C}_{\Psi_-} & : \mathcal{H}(\C^2) \to \mathcal{H}(\Omega_-) \, , \qquad \mathcal{C}_{\Psi_-} (F):=F \circ \Psi_- \, ,
  \end{align}
  this means  that both Fr\'echet spaces, $\mathcal{H}(\Omega_+)$ and $\mathcal{H}(\Omega_-)$, are  isomorphic to $\mathcal{H}(\C^2)$.

\begin{proposition}[] \label{prop:OmegaPM}
  The domains $\Omega_{+}$ and $\Omega_-$ are biholomorphically equivalent to $\C^2$  and
  the composition operators (\ref{eq:C+}) and (\ref{eq:C-})  are continuous and bijective.
\end{proposition}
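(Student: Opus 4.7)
The plan is to exhibit an explicit holomorphic inverse and to verify the chart computations at the points at infinity; the only genuine obstacle in the argument is this bookkeeping, everything else being algebra or a standard appeal to the compact-open topology. By the evident symmetry $(z,w) \leftrightarrow (w,z)$, which swaps $\Omega_+$ with $\Omega_-$ and $\Psi_+$ with $\Psi_-$, it suffices to treat the $+$ case.

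First I would write down the natural candidate
\[
\Psi_+^{-1} : \C^2 \longrightarrow \Omega_+, \qquad \Psi_+^{-1}(u,v) = \left( \frac{u}{1+uv},\, v \right),
\]
interpreting the first coordinate as $\infty$ when $1+uv=0$; note that in this event $u \ne 0$ automatically, so no ambiguity arises. An elementary computation inside the affine chart $\phi_+$ shows that $\Psi_+ \circ \Psi_+^{-1}$ and $\Psi_+^{-1} \circ \Psi_+$ are the respective identities wherever both sides are finite, and that $\Psi_+^{-1}(u,v) \in \Omega_+$ holds throughout $\C^2$: indeed $zw = uv/(1+uv) \ne 1$ off the locus $1+uv=0$, while on that locus $\Psi_+^{-1}(u,v) = (\infty, v)$ with $v \ne 0$, so that $z\cdot w = \infty \ne 1$ by the convention of the footnote.

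The main technical step is to verify holomorphicity at the points where the affine formulas involve $\infty$. For $\Psi_+$ these are the points $(\infty, w_0) \in \Omega_+$ with $w_0 \ne 0$; here I would pass to the flip chart $\phi_-(z,w) = (1/w,1/z)$, in which $(\infty, w_0)$ has coordinates $(1/w_0, 0)$. Substituting $z = 1/\tilde\beta$, $w = 1/\tilde\alpha$ into the defining formula of $\Psi_+$ gives the local representative
\[
(\tilde\alpha, \tilde\beta) \longmapsto \left( \frac{\tilde\alpha}{\tilde\alpha\tilde\beta - 1},\, \frac{1}{\tilde\alpha}\right),
\]
which is holomorphic on a neighbourhood of $(1/w_0, 0)$ since $1/w_0 \ne 0$. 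A parallel check for $\Psi_+^{-1}$ at a point $(u_0, v_0)$ with $1 + u_0 v_0 = 0$ (forcing $u_0, v_0 \ne 0$) yields $\phi_- \circ \Psi_+^{-1}(u,v) = (1/v,\,(1+uv)/u)$, again holomorphic near $(u_0, v_0)$. Hence $\Psi_+ : \Omega_+ \to \C^2$ is a biholomorphism.

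Given the biholomorphism, the claims about $\mathcal{C}_{\Psi_+}$ are immediate from standard facts about the compact-open topology on $\mathcal{H}$: the image $\Psi_+(K) \subseteq \C^2$ of any compact $K \subseteq \Omega_+$ is compact, whence
\[
\sup_{(z,w) \in K} |F \circ \Psi_+(z,w)| = \sup_{\Psi_+(K)} |F|
\]
shows that $\mathcal{C}_{\Psi_+}$ is continuous, while $G \mapsto G \circ \Psi_+^{-1}$ provides a two-sided inverse, proving bijectivity. The case of $\Psi_-$ follows by the coordinate swap.
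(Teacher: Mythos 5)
Your proposal is correct and is exactly the ``simple verification'' the paper alludes to: exhibiting the explicit inverse $\Psi_+^{-1}(u,v)=\left(\tfrac{u}{1+uv},v\right)$, checking holomorphy at the points at infinity via the flip chart, and deducing continuity and bijectivity of $\mathcal{C}_{\Psi_\pm}$ from $\sup_K|F\circ\Psi_+|=\sup_{\Psi_+(K)}|F|$ and the inverse composition operator. The symmetry reduction $(z,w)\leftrightarrow(w,z)$ and the chart computations are all accurate, so nothing further is needed.
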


The proof is by  simple verification. The second ingredient for the proof of Theorem \ref{thm:HOmegaDecomposition} is
the canonical decomposition of the Fr\'echet space $\mathcal{H}(\C^2)$ of all entire functions $F :  \C^2 \to \C$,
\begin{equation*} \label{eq:HC2Decomposition}
  \mathcal{H}(\C^2)=\mathcal{H}_+(\C^2) \oplus \mathcal{H}_-(\C^2) \, ,
  \end{equation*}
  where $\mathcal{H}_+(\C^2)$ denotes the  closed linear hull  of the monomials   $z^p w^q$ with $p \ge q$,  and
  $\mathcal{H}_-(\C^2)$ is  the closed linear hull of the monomials $z^p w^q$ with $p < q$.
We call $\mathcal{H}_-(\C^2)$ the past and $\mathcal{H}_+(\C^2)$ the future in $\mathcal{H}(\C^2)$.
Roughly speaking, we  shall show that the direct sum of the restriction of the operator $\mathcal{C}_{\Psi_-}$ to $\mathcal{H}_-(\C^2)$
and the restriction of the operator $\mathcal{C}_{\Psi_+}$ to $\mathcal{H}_+(\C^2)$
provides a continuous linear bijection from $\mathcal{H}(\C^2)$ onto $\mathcal{H}(\Omega)$. Along the way, we will encounter several subtleties, which we carefully have to bypass. The construction is in two steps. In the first step, we establish the following Proposition, which provides
the bridge from the past and the future in $\mathcal{H}(\C^2)$ to the past and the future in $\mathcal{H}(\Omega)$.

 \begin{proposition} \label{prop:Extension} \phantom{aa}\\[-11mm]
   \begin{itemize}
     \item[(a)]
   For each $F^+ \in \mathcal{H}_+(\C^2)$ the function  $\mathcal{C}_{\Psi_+}(F^+)\in \mathcal{H}(\Omega_+)$ has a holomorphic extension $\Phi_+(F^+) \in \mathcal{H}_+(\Omega)$, and the induced linear operator
   $$ \Phi_+   : \mathcal{H}_+(\C^2) \to \mathcal{H}_+(\Omega)  $$
is  continuous and one-to-one.
\item[(b)]  For each  $F^- \in \mathcal{H}_-(\C^2)$ the function $\mathcal{C}_{\Psi_-}(F^-)\in \mathcal{H}(\Omega_-)$ has a holomorphic extension $\Phi_-(F^-) \in \mathcal{H}_-(\Omega)$, and the induced linear operator
   $$ \Phi_-: \mathcal{H}_-(\C^2) \to \mathcal{H}_-(\Omega)  $$
is  continuous and one-to-one.
\end{itemize}
\end{proposition}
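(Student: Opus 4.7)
My plan is to prove both parts by analyzing the composition operator on the monomial basis of $\mathcal{H}_\pm(\C^2)$ and then controlling the resulting series directly. I write out the argument for (a); part (b) follows by an identical argument with the roles of $z$ and $w$ interchanged.

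The starting observation is the explicit computation on basis monomials: for $p\ge q$,
$$\mathcal{C}_{\Psi_+}(u^p v^q)(z,w) \;=\; \left(\frac{z}{1-zw}\right)^{\!p} w^q \;=\; \frac{z^p w^q}{(1-zw)^{\max\{p,q\}}} \;=\; f_{p,q}(z,w),$$
and the crucial point is that, precisely when $p\ge q$, the rational function $f_{p,q}$ is holomorphic on \emph{all} of $\Omega$ and not merely on $\Omega_+$. This is verified in the flip chart $\phi_-(z,w)=(1/w,1/z)$: setting $\tilde u = 1/w$ and $\tilde v = 1/z$ gives
$$f_{p,q}(1/\tilde v,1/\tilde u) \;=\; \frac{\tilde u^{p-q}}{(\tilde u \tilde v - 1)^p},$$
which is manifestly holomorphic at $\tilde u = 0$ thanks to $p-q\ge 0$; moreover the denominator vanishes exactly on $\{zw=1\}$, so it is nowhere zero on $\Omega$. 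In particular $f_{p,q}\in \mathcal{H}(\Omega)$ and, as a finite linear combination of the basis $\{f_{p,q}:p\ge q\}$ defining $\mathcal{H}_+(\Omega)$, every trigonometric polynomial in these monomials lies in $\mathcal{H}_+(\Omega)$.

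Given $F^+ \in \mathcal{H}_+(\C^2)$, I expand $F^+ = \sum_{p\ge q} a_{p,q}\, u^p v^q$ with locally uniform convergence on $\C^2$ and define
$$\Phi_+(F^+) \;:=\; \sum_{p\ge q} a_{p,q}\, f_{p,q}\quad\text{on }\Omega.$$
On $\Omega_+$ this series converges locally uniformly to $\mathcal{C}_{\Psi_+}(F^+)=F^+\circ\Psi_+$ by the continuity of $\mathcal{C}_{\Psi_+}$ asserted in Proposition \ref{prop:OmegaPM}. The substance of the proof is to show that the series also converges locally uniformly in a neighborhood of every point $(z_0,\infty)\in\Omega$ (equivalently, on a neighborhood of $\tilde u = 0$ in the flip chart). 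On a small flip-chart box $\{|\tilde u|\le B,\, |\tilde v|\le A\}$ with $AB<1$, one has $|\tilde u\tilde v - 1|\ge 1-AB =: \delta > 0$, so the displayed formula yields $|f_{p,q}|\le B^{p-q}/\delta^p$. Cauchy's inequalities for the entire function $F^+$ give $|a_{p,q}|\le M(R)\,R^{-(p+q)}$ for every $R>0$, with $M(R)=\sup_{|u|,|v|\le R}|F^+|$; hence
$$\sum_{p\ge q}|a_{p,q}|\,|f_{p,q}| \;\le\; M(R)\sum_{p\ge q}\left(\frac{B}{R\delta}\right)^{\!p}(RB)^{-q},$$
which is finite as soon as $R$ is chosen large enough that $R\delta>B$ and $RB>1$. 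This is precisely where the restriction $p\ge q$ is essential: without it the factor $\tilde u^{p-q}$ would become a pole at $\tilde u=0$ and no such estimate would be possible.

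The remaining assertions are then immediate. By construction $\Phi_+(F^+)$ is the limit in $\mathcal{H}(\Omega)$ of finite linear combinations of the $f_{p,q}$ with $p\ge q$, so $\Phi_+(F^+)\in\mathcal{H}_+(\Omega)$. The estimate above is linear in the seminorm $M(R)$ of $F^+$, which delivers continuity of the linear map $\Phi_+:\mathcal{H}_+(\C^2)\to\mathcal{H}_+(\Omega)$ with respect to the natural Fr\'echet topologies. Finally, if $\Phi_+(F^+)\equiv 0$ on $\Omega$, then its restriction to $\Omega_+$ vanishes, i.e.\ $\mathcal{C}_{\Psi_+}(F^+)=0$; by the bijectivity of $\mathcal{C}_{\Psi_+}$ in Proposition \ref{prop:OmegaPM}, this forces $F^+=0$. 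The main obstacle, as anticipated, is the uniform convergence estimate near the missing line $\{(z,\infty)\}$; once that is established, the rest reduces to bookkeeping against the definitions and the content of Proposition \ref{prop:OmegaPM}.
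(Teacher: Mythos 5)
Your argument is correct and follows essentially the same route as the paper: expand $F^+$ into its monomial series, control the coefficients by Cauchy estimates, and bound $f_{p,q}$ geometrically near the removed line $\{(z,\infty)\,:\,z\in\hat{\C}\}$ using $p\ge q$ — your flip--chart bound $\abs{f_{p,q}}\le B^{p-q}\delta^{-p}$ is the same estimate the paper obtains on the part of a compact set where $\abs{w}\ge 1$ by writing $f_{p,q}=f_{1,1}^{\,p}w^{q-p}$. The only difference is bookkeeping: the paper proves one uniform estimate on an arbitrary compact $K\subseteq\Omega$ (splitting into $\abs{w}\le 1$ and $\abs{w}\ge 1$), which gives convergence and the continuity seminorm bound in a single stroke, whereas you delegate the $\Omega_+$--part to Proposition \ref{prop:OmegaPM} and then need the routine (and only implicit in your write--up) step of covering a general compact subset of $\Omega$ by finitely many flip--chart boxes together with a compact subset of $\Omega_+$ to conclude continuity.
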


In particular, the linear operator $\Phi:=\Phi_+\oplus \Phi_- : \mathcal{H}(\C^2) \to \mathcal{H}(\Omega)$ is continuous and one-to-one. In the second step, it remains to  prove that $\Phi$ is onto in order to complete the following diagram:
\begin{center}
\begin{figure}[H]
\begin{tikzcd}[column sep=10pt, row sep=50pt]
  \mathcal{H}(\C^2) \arrow[d, "\phantom{;;}\Phi_{\phantom{+}}" description, two heads]  &  = \arrow[d,dash,draw=white, "\phantom{;}=_{\phantom{a}}" description] & \mathcal{H}_+(\C^2) \arrow[d, "\phantom{;;}\Phi_+" description, two heads] & \oplus\arrow[d,dash,draw=white, "\hspace{0.15cm}\oplus_{\phantom{a}}" description] & \mathcal{H}_-(\C^2) \arrow[d, "\phantom{;;}\Phi_-" description, two heads] \\
   \mathcal{H}(\Omega)   &  = & \mathcal{H}_+(\Omega)  & \oplus & \mathcal{H}_-(\Omega)
\hide{  \arrow[d, "\Phi" description] & =   \arrow[d, "\Phi_+" description] &\oplus &  \arrow[d, "\Phi_-" description]\\}
\end{tikzcd}
\caption{Decomposition of $\mathcal{H}(\Omega)$}
\end{figure}
\end{center}

For the proof that $\Phi : \mathcal{H}(\C^2) \to \mathcal{H}(\Omega)$ is onto,  we show that the natural projections that map from $\mathcal{H}(\C^2)$ onto its closed complementary subspaces $\mathcal{H}_+(\C^2)$ and $\mathcal{H}_-(\C^2)$ also act as continuous projections from $\mathcal{H}(\Omega_\pm)$ onto $\mathcal{H}_\pm(\Omega)$. As we shall see, in  combination with Proposition \ref{prop:OmegaPM},  this will imply that  the linear operators $\Phi_\pm: \mathcal{H}_\pm(\C^2) \to \mathcal{H}_\pm(\Omega)$ in Proposition  \ref{prop:Extension} are surjective and also that $\mathcal{H}(\Omega)=\mathcal{H}_+(\Omega) \oplus \mathcal{H}_-(\Omega)$.

\medskip

Note that the bidisk $\D^2$ is contained in each of the domains $\Omega$, $\Omega_+$ and $\Omega_-$, see again Figure~\ref{fig:OmegaPM}.
We denote by $\pi_+ : \mathcal{H}(\D^2) \to \mathcal{H}_+(\D^2)$ and  $\pi_- : \mathcal{H}(\D^2) \to \mathcal{H}_-(\D^2)$ the natural projection maps defined by
\begin{equation} \label{eq:proj}
 \pi_+\left( \sum \limits_{p,q=0}^{\infty} a_{p,q} u^pv^q \right):= \sum \limits_{p \ge q} a_{p,q} u^pv^q \, , \qquad
 \pi_-\left( \sum \limits_{p,q=0}^{\infty} a_{p,q} u^pv^q \right):= \sum \limits_{p < q} a_{p,q} u^pv^q\, .
 \end{equation}
 Clearly, the restrictions of $\pi_{\pm}$ to the subspace $\mathcal{H}(\C^2)$ of $\mathcal{H}(\D^2)$ are continuous projections from $\mathcal{H}(\C^2)$ onto $\mathcal{H}_{\pm}(\C^2)$. The following result shows that the restrictions of $\pi_{\pm}$ to the subspaces $\mathcal{H}(\Omega_{\pm})$ of $\mathcal{H}(\D^2)$ are continuous projections from $\mathcal{H}(\Omega_{\pm})$ onto $\mathcal{H}_{\pm}(\Omega)$ as well.

 \begin{proposition} \label{prop:Extension2}\phantom{aa}\\[-10mm]
   \begin{itemize}
   \item[(a)] For each $f \in \mathcal{H}(\Omega_+)$ the function $\pi_+(f) \in \mathcal{H}(\D^2)$ has a holomorphic extension to a function in $\mathcal{H}_+(\Omega)$, and the induced linear operator
     $$\pi_+ : \mathcal{H}(\Omega_+) \to \mathcal{H}_+(\Omega)$$
     is continuous.
     Moreover,  $\pi_+ : \mathcal{H}(\Omega_+) \to \mathcal{H}(\Omega_+)$ is a projection onto $\mathcal{H}_+(\Omega)$, and the diagram in Figure \ref{fig:1} (a) commutes.
   \item[(b)] For each $f \in \mathcal{H}(\Omega_-)$ the function $\pi_-(f) \in \mathcal{H}(\D^2)$ has a holomorphic extension to a function in $\mathcal{H}_-(\Omega)$, and the  induced linear operator
     $$\pi_- : \mathcal{H}(\Omega_-) \to \mathcal{H}_-(\Omega)$$ is continuous. Moreover,
$\pi_- : \mathcal{H}(\Omega_-) \to \mathcal{H}(\Omega_-)$ is a projection onto $\mathcal{H}_-(\Omega)$,
   and the diagram in Figure \ref{fig:1} (b) commutes.
   \end{itemize}

   \begin{figure}[h]
\centering
\begin{subfigure}{.5\textwidth}
  \centering  \begin{tikzcd}[column sep=50pt]
\mathcal{H}(\C^2) \arrow[r, "\mathcal{C}_{\Psi_+}","\cong"'] \arrow[d, black,  "\pi_+", two heads]
& \mathcal{H}(\Omega_+) \arrow[d, "\pi_+", two heads] \\[30pt]
\mathcal{H}_+(\C^2) \arrow[r, "\Phi_+","\cong"' ]
& \mathcal{H}_+(\Omega)
\end{tikzcd}
  \caption{}
  \label{fig:sub1}
\end{subfigure}%
\begin{subfigure}{.5\textwidth}
  \centering  \begin{tikzcd}[column sep=50pt]
\mathcal{H}(\C^2) \arrow[r, "\mathcal{C}_{\Psi_-}","\cong"'] \arrow[d, black,  "\pi_-", two heads]
& \mathcal{H}(\Omega_-) \arrow[d, "\pi_-", two heads] \\[30pt]
\mathcal{H}_-(\C^2) \arrow[r, "\Phi_-","\cong"']
& \mathcal{H}_-(\Omega)
\end{tikzcd}
   \caption{}
  \label{fig:sub2}
\end{subfigure}
\caption{Commutation of composition operators and projections}
\label{fig:1}
\end{figure}
\end{proposition}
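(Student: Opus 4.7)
The plan is to reduce the assertion to a Taylor-coefficient computation on the bidisk $\D^2$ via the biholomorphic identification $\Psi_+:\Omega_+\to\C^2$ supplied by Proposition \ref{prop:OmegaPM}, and then to invoke Proposition \ref{prop:Extension}(a) for the holomorphic extension across the line $w=\infty$. I shall only present the argument for part (a); part (b) is entirely parallel with $(\Psi_+,\Phi_+,\mathcal{H}_+(\cdot))$ replaced by $(\Psi_-,\Phi_-,\mathcal{H}_-(\cdot))$ throughout.

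First, by Proposition \ref{prop:OmegaPM} the composition operator $\mathcal{C}_{\Psi_+}:\mathcal{H}(\C^2)\to\mathcal{H}(\Omega_+)$ is a continuous linear bijection between Fr\'echet spaces, so the Open Mapping Theorem supplies the continuity of its inverse. Given $f\in\mathcal{H}(\Omega_+)$, I set $F:=\mathcal{C}_{\Psi_+}^{-1}(f)\in\mathcal{H}(\C^2)$ and decompose $F=F_++F_-$ according to the topological direct sum $\mathcal{H}(\C^2)=\mathcal{H}_+(\C^2)\oplus\mathcal{H}_-(\C^2)$, the summands being obtained as the values of the continuous projections $\pi_{\pm}|_{\mathcal{H}(\C^2)}$ at $F$.

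The core computation is that $\mathcal{C}_{\Psi_+}$ is compatible with the $\Z$-grading on Taylor monomials by the difference of exponents. For each $(p,q)\in\N_0^2$ one has the absolutely convergent expansion
\begin{equation*}
 \left(\frac{z}{1-zw}\right)^{\!p} w^q = \sum_{k=0}^{\infty} \binom{p+k-1}{k}\, z^{p+k}\, w^{q+k}
\end{equation*}
on $\D^2$, so every monomial $z^m w^n$ appearing in the Taylor series of $\mathcal{C}_{\Psi_+}(u^p v^q)$ satisfies $m-n=p-q$. By linearity and continuity, the Taylor expansion of $\mathcal{C}_{\Psi_+}(F_+)$ on $\D^2$ contains only monomials with first index $\ge$ second index, whereas that of $\mathcal{C}_{\Psi_+}(F_-)$ contains only monomials with first index $<$ second index. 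Consequently, applying $\pi_+$ to $f = \mathcal{C}_{\Psi_+}(F_+) + \mathcal{C}_{\Psi_+}(F_-)$ on $\D^2$ yields
\begin{equation*}
 \pi_+(f)\big|_{\D^2} = \mathcal{C}_{\Psi_+}(F_+)\big|_{\D^2}.
\end{equation*}

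By Proposition \ref{prop:Extension}(a), $\mathcal{C}_{\Psi_+}(F_+)\in\mathcal{H}(\Omega_+)$ extends holomorphically to $\Phi_+(F_+)\in\mathcal{H}_+(\Omega)$, and this extension is unique by the identity theorem since $\D^2$ is open in $\Omega$. Hence $\pi_+(f)$ admits the claimed holomorphic extension to $\mathcal{H}_+(\Omega)$, namely $\Phi_+(F_+)$, and the map $\pi_+:\mathcal{H}(\Omega_+)\to\mathcal{H}_+(\Omega)$ factors as $\pi_+ = \Phi_+\circ(\pi_+|_{\mathcal{H}(\C^2)})\circ\mathcal{C}_{\Psi_+}^{-1}$, so its continuity follows since each factor is continuous. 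The identity $\pi_+\circ\mathcal{C}_{\Psi_+} = \Phi_+\circ\pi_+|_{\mathcal{H}(\C^2)}$ established above is precisely the commutativity asserted in Figure \ref{fig:1}(a). Finally, $\pi_+^2=\pi_+$: any $g\in\mathcal{H}_+(\Omega)\subseteq\mathcal{H}(\Omega_+)$ is of the form $g=\Phi_+(G_+)$ with $G_+\in\mathcal{H}_+(\C^2)$, and $\mathcal{C}_{\Psi_+}^{-1}(g)=G_+$ on $\Omega_+$, whence $\pi_+|_{\mathcal{H}(\C^2)}(G_+)=G_+$ yields $\pi_+(g)=\Phi_+(G_+)=g$. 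The principal obstacle is purely organisational: one must scrupulously keep track of four coexisting categories of objects (germs on $\D^2$, holomorphic functions on $\Omega_+$, on $\C^2$, and on $\Omega$) and invoke the identity theorem at each transition to propagate equalities from $\D^2$ to the larger domains.
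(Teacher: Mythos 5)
Your argument is essentially the paper's own: the binomial expansion showing that $\mathcal{C}_{\Psi_+}(u^pv^q)$ produces only monomials $z^mw^n$ with $m-n=p-q$ is precisely the content of the paper's Proposition \ref{prop:pi}, and your factorization $\pi_+=\Phi_+\circ(\pi_+|_{\mathcal{H}(\C^2)})\circ\mathcal{C}_{\Psi_+}^{-1}$ is the paper's identity \eqref{eq:comm}, from which continuity and the commutativity of Figure \ref{fig:1}(a) follow exactly as you say.

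There is, however, a genuine gap in your last step. To show that $\pi_+$ fixes $\mathcal{H}_+(\Omega)$ pointwise (and hence is a projection \emph{onto} $\mathcal{H}_+(\Omega)$) you assert that every $g\in\mathcal{H}_+(\Omega)$ has the form $\Phi_+(G_+)$ with $G_+\in\mathcal{H}_+(\C^2)$, i.e.\ that $\Phi_+$ is surjective onto $\mathcal{H}_+(\Omega)$. Proposition \ref{prop:Extension}(a) only provides continuity and injectivity of $\Phi_+$; its image contains the finite linear combinations of the $f_{p,q}$ with $p\ge q$, but $\mathcal{H}_+(\Omega)$ is by definition the \emph{closure} of that span, and the range of a continuous injection between Fr\'echet spaces need not be closed, so surjectivity is not automatic. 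Indeed, in the paper this surjectivity is deduced only \emph{after} and \emph{from} Proposition \ref{prop:Extension2}, so invoking it here is circular as written. (Note that idempotency $\pi_+^2=\pi_+$ alone already follows from your factorization, since $\mathcal{C}_{\Psi_+}^{-1}\circ\Phi_+=\mathrm{id}$ on $\mathcal{H}_+(\C^2)$ and $\pi_+|_{\mathcal{H}(\C^2)}$ is a projection; it is the fixed-point property on $\mathcal{H}_+(\Omega)$ that needs an argument.) The repair is short and uses only what you have already established: since $\pi_+\colon\mathcal{H}(\Omega_+)\to\mathcal{H}(\D^2)$ and the restriction map $\mathcal{H}(\Omega)\to\mathcal{H}(\Omega_+)$ are continuous, the set $\{g\in\mathcal{H}(\Omega)\,:\,\pi_+(g|_{\Omega_+})=g|_{\D^2}\}$ is a closed subspace of $\mathcal{H}(\Omega)$; by your grading computation it contains every $f_{p,q}$ with $p\ge q$, hence their closed span $\mathcal{H}_+(\Omega)$, and the identity theorem upgrades equality on $\D^2$ to $\pi_+(g)=g$. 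This is how the paper concludes. Alternatively, your surjectivity claim can be proved directly: approximate $g\in\mathcal{H}_+(\Omega)$ by finite combinations $g_n$ of the $f_{p,q}$, $p\ge q$; then $\mathcal{C}_{\Psi_+}^{-1}(g_n)\in\mathcal{H}_+(\C^2)$ converges to $G_+:=\mathcal{C}_{\Psi_+}^{-1}(g|_{\Omega_+})$, which lies in the closed subspace $\mathcal{H}_+(\C^2)$, and then $\Phi_+(G_+)=g$ by uniqueness of holomorphic extensions.
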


Note that we are systematically abusing language by using the same symbols, $\pi_+$ and $\pi_-$, for denoting the projections (\ref{eq:proj}) which act on different functions spaces depending on the context. This should cause no serious confusion.

\begin{remark}
  Proposition \ref{prop:Extension2} implies that the set $\mathcal{H}_+(\Omega)$ is also  closed w.r.t.~the Fr\'echet topology of the ambient space $\mathcal{H}(\Omega_+)$. Hence the map $\pi_+ : \mathcal{H}(\Omega_+) \to \mathcal{H}(\Omega_+)$ is a \textbf{continuous} projection onto the closed subspace
  $\mathcal{H}_+(\Omega)$ of $\mathcal{H}(\Omega_+)$. The same remark applies to $\pi_-$. However, for the proof of Theorem \ref{thm:HOmegaDecomposition} we need the full force of Proposition \ref{prop:Extension2}, that is, the continuity of $\pi_+ :  \mathcal{H}(\Omega_+) \to \mathcal{H}_+(\Omega)$ w.r.t.~the canonical Fr\'echet topology of $ \mathcal{H}_+(\Omega)$.
  \end{remark}

  It follows at once from Proposition \ref{prop:Extension2}  that the operators $\Phi_+  : \mathcal{H}_+(\C^2) \to \mathcal{H}_+(\Omega)$ and $ \Phi_-: \mathcal{H}_-(\C^2) \to \mathcal{H}_-(\Omega)$ in Proposition \ref{prop:Extension} are not only  continuous and one-to-one, but also surjective.
Their direct sum $\mathcal{C}_{\Psi_+} \oplus \mathcal{C}_{\Psi_-} : \mathcal{H}(\C^2)  \to \mathcal{H}(\Omega)$ is a continuous bijection:

\begin{theorem}[Canonical decomposition of $\mathcal{H}(\Omega)$, fine structure] \label{thm:HOmegaDecomposition2}
The map
    \begin{align*}
      \Phi\quad  :  & \quad    \mathcal{H}(\C^2) \to \mathcal{H}(\Omega) \, , \\
      & \quad   F \mapsto \Phi(F):=\Phi_+\left(\pi_+(F)\right) +\Phi_-\left(\pi_-(F)\right)
    \end{align*}
    is a continuous linear bijection. Its inverse map is given by
     \begin{align*}
      \Phi^{-1}\quad  :  & \quad    \mathcal{H}(\Omega) \to \mathcal{H}(\C^2) \, , \\
      & \quad   f \mapsto \Phi^{-1}(f):=\pi_+\left(f \circ \Psi_+^{-1}\right)+\pi_-\left(f \circ \Psi_-^{-1}\right) \, .
    \end{align*}
    In addition, the isomorphisms $\Phi : \mathcal{H}(\C^2) \to \mathcal{H}(\Omega)$ and $\Phi^{-1} : \mathcal{H}(\Omega) \to \mathcal{H}(\C^2)$ preserve the past and the future,
    $$ \Phi\left(\mathcal{H}_-(\C^2)\right)=\mathcal{H}_-(\Omega) \, , \qquad  \Phi\left(\mathcal{H}_+(\C^2)\right)=\mathcal{H}_+(\Omega) \, .$$
  \end{theorem}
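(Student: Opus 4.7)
The plan is to assemble the theorem from the three preceding propositions and the remark before it. The continuity of $\Phi$ is immediate since it is the sum $\Phi_+ \circ \pi_+ + \Phi_- \circ \pi_-$ of continuous compositions, and the candidate inverse is well-defined because Proposition \ref{prop:OmegaPM} provides the entire pullbacks $f \circ \Psi_\pm^{-1}$ on which $\pi_\pm$ may then act, producing elements of $\mathcal{H}_\pm(\C^2)$.

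The core of the work is verifying bijectivity. For $\Phi^{-1} \circ \Phi = \mathrm{id}_{\mathcal{H}(\C^2)}$, I would take $F \in \mathcal{H}(\C^2)$, expand via the defining formula, and split into diagonal and cross terms in each of the two charts. The diagonal term $\Phi_+(\pi_+(F)) \circ \Psi_+^{-1}$ collapses to $\pi_+(F)$ because by the very definition of $\Phi_+$ one has $\Phi_+(G)|_{\Omega_+} = \mathcal{C}_{\Psi_+}(G)$ for $G \in \mathcal{H}_+(\C^2)$. The nontrivial point is that the cross term $\pi_+\bigl([\Phi_-(\pi_-(F))] \circ \Psi_+^{-1}\bigr)$ vanishes. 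I would argue this by applying the commutative diagram of Proposition \ref{prop:Extension2}(a), namely $\Phi_+ \circ \pi_+ = \pi_+ \circ \mathcal{C}_{\Psi_+}$, which reduces the claim to $\pi_+(\Phi_-(\pi_-(F))) = 0$ in $\mathcal{H}_+(\Omega)$. The latter vanishing holds because, by inspection of the Taylor expansion of each $f_{p,q}$ on $\D^2$ together with continuity of the Taylor coefficient functionals on $\mathcal{H}(\D^2)$, the closed subspaces $\mathcal{H}_+(\Omega)$ and $\mathcal{H}_-(\Omega)$ have complementary Taylor supports at the origin, so the projection $\pi_+$ of Proposition \ref{prop:Extension2}(a) annihilates $\mathcal{H}_-(\Omega)$. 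Injectivity of $\Phi_+$ then closes the step, and the symmetric argument handles the $\pi_-$ side.

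For the reverse identity $\Phi \circ \Phi^{-1} = \mathrm{id}_{\mathcal{H}(\Omega)}$, the commutative diagrams of Proposition \ref{prop:Extension2} let me rewrite $\Phi(\Phi^{-1}(f))$ as $\pi_+(f|_{\Omega_+}) + \pi_-(f|_{\Omega_-})$, where $\pi_\pm$ now denote the projections from $\mathcal{H}(\Omega_\pm)$ onto $\mathcal{H}_\pm(\Omega)$. Restricted to $\D^2 \subseteq \Omega_+ \cap \Omega_-$, where $f$ admits an honest power series $\sum a_{p,q} z^p w^q$, the two summands are precisely the complementary halves selected by the defining projections \eqref{eq:proj}, so their sum equals $f|_{\D^2}$. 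Since $\Omega$ is connected (indeed simply connected, by Section \ref{sec:OtherModels}), the identity theorem propagates the equality to all of $\Omega$.

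Finally, past/future preservation is immediate from the formula: if $F \in \mathcal{H}_-(\C^2)$ then $\pi_+(F) = 0$ and $\pi_-(F) = F$, whence $\Phi(F) = \Phi_-(F) \in \mathcal{H}_-(\Omega)$; the reverse inclusion is the surjectivity of $\Phi_-$ recorded in the remark preceding the theorem, and the $\mathcal{H}_+$ case is identical. The main obstacle, I expect, is less algebraic than organisational: the projections $\pi_\pm$ appear as operators between three different pairs of spaces bearing the same name, and the clean cross-term vanishing in $\Phi^{-1} \circ \Phi$ is exactly the point where the commutative diagrams of Proposition \ref{prop:Extension2} must be combined with the complementary Taylor-support description of $\mathcal{H}_\pm(\Omega)$ to produce the right identifications.
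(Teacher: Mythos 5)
Your proposal is correct and follows essentially the same route as the paper: both assemble the theorem from Propositions \ref{prop:OmegaPM}, \ref{prop:Extension} and \ref{prop:Extension2}, use the commutative diagrams of Figure \ref{fig:1} to rewrite $\Phi_\pm(\pi_\pm(f\circ\Psi_\pm^{-1}))$ as $\pi_\pm(f)$, and conclude via $\pi_++\pi_-=\mathrm{id}$ on $\D^2$ together with the identity theorem. The only (harmless) organisational difference is that you verify $\Phi^{-1}\circ\Phi=\mathrm{id}$ explicitly via the cross-term vanishing $\pi_+(\mathcal{H}_-(\Omega))=\{0\}$, whereas the paper obtains injectivity directly from the injectivity of $\Phi_\pm$ and then only checks surjectivity with the explicit preimage formula.
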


  By construction, $\Phi$ maps each element $u^pv^q$ of the canonical Schauder basis $(u^pv^q)_{p,q}$ of $\mathcal{H}(\C^2)$ to $f_{p,q} \in \mathcal{H}(\Omega)$ defined in (\ref{eq:SchauderBasis}). Since $\Phi: \mathcal{H}(\C^2) \to \mathcal{H}(\Omega)$ is a continuous bijection, we finally arrive at the following result.

  \begin{corollary} \label{cor:Schauder}
    $(f_{p,q})_{p,q \in \N_0}$ is a Schauder basis of $\mathcal{H}(\Omega)$. In particular, each $f \in \mathcal{H}(\Omega)$ has a unique representation as
    $$ f(z,w)=\sum \limits_{p,q=0}^{\infty} a_{p,q} f_{p,q}(z,w) \, .$$
    This series converges absolutely and locally uniformly in $\Omega$, and the Schauder coefficients $a_{p,q}$ of $f$ are given by
    \begin{equation} \label{eq:SchauderCoeff}
      a_{p,q}= \begin{cases}
      \displaystyle   -\frac{1}{4 \pi^2} \int \limits_{\partial \D} \int
\limits_{\partial \D} f\left(z, \frac{w}{1+zw} \right) \frac{dzdw}{z^{p+1}
  w^{q+1}} & p<q \, , \\[2mm]
\displaystyle -\frac{1}{4 \pi^2} \int \limits_{\partial \D} \int
\limits_{\partial \D} f\left( \frac{z}{1+zw} ,w\right) \frac{dzdw}{z^{p+1}
  w^{q+1}} & p \ge q
        \end{cases}
      \end{equation}

    \end{corollary}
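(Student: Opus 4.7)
The plan is to read Corollary~\ref{cor:Schauder} directly off Theorem~\ref{thm:HOmegaDecomposition2} combined with the two-variable Cauchy integral formula. Since $\Phi\colon\mathcal{H}(\C^2)\to\mathcal{H}(\Omega)$ is a topological isomorphism of Fr\'echet spaces and the paragraph immediately preceding the corollary records that $\Phi(u^pv^q)=f_{p,q}$, the image of the canonical monomial Schauder basis $(u^pv^q)_{p,q\in\N_0}$ of $\mathcal{H}(\C^2)$ under~$\Phi$ is a Schauder basis of $\mathcal{H}(\Omega)$, and the Schauder coefficients of $f\in\mathcal{H}(\Omega)$ relative to $(f_{p,q})$ coincide with the Taylor coefficients of $F:=\Phi^{-1}(f)$. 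Locally uniform convergence on $\Omega$ is transported by~$\Phi$, while absolute convergence follows by decomposing $F$ into its future part $\pi_+(F)\in\mathcal{H}_+(\C^2)$ and past part $\pi_-(F)\in\mathcal{H}_-(\C^2)$, on each of which the Taylor expansion converges absolutely and locally uniformly in $\C^2$; the continuous operators $\Phi_\pm$ from Proposition~\ref{prop:Extension} then transport both convergence properties to the corresponding partial sums on $\Omega$.

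To produce the explicit formula for $a_{p,q}$, I would invoke the inverse formula
\[
\Phi^{-1}(f)=\pi_+\bigl(f\circ\Psi_+^{-1}\bigr)+\pi_-\bigl(f\circ\Psi_-^{-1}\bigr)
\]
from Theorem~\ref{thm:HOmegaDecomposition2}. Inverting the elementary maps of Proposition~\ref{prop:OmegaPM} gives $\Psi_+^{-1}(z,w)=(z/(1+zw),w)$ and $\Psi_-^{-1}(z,w)=(z,w/(1+zw))$, and since $\Psi_\pm^{-1}$ biholomorphically identify $\C^2$ with $\Omega_\pm$, the compositions $f\circ\Psi_\pm^{-1}$ are entire in $\C^2$. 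Because $\pi_\pm$ from~\eqref{eq:proj} are complementary projections onto the disjoint index sets $\{p\ge q\}$ and $\{p<q\}$, the Taylor coefficient of $\Phi^{-1}(f)$ at $u^pv^q$ is read off from $f\circ\Psi_+^{-1}$ when $p\ge q$ and from $f\circ\Psi_-^{-1}$ when $p<q$. A direct application of the two-variable Cauchy integral formula over $\partial\D\times\partial\D$, combined with $1/(2\pi i)^2=-1/(4\pi^2)$, yields the two cases of~\eqref{eq:SchauderCoeff}.

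Essentially the only computation not covered by the preceding theorems is the verification of the identity $\Phi(u^pv^q)=f_{p,q}$; for $p\ge q$ one has $\pi_+(u^pv^q)=u^pv^q$ and $\pi_-(u^pv^q)=0$, so $\Phi(u^pv^q)=\Phi_+(u^pv^q)$ is the holomorphic extension to $\Omega$ of
\[
\mathcal{C}_{\Psi_+}(u^pv^q)(z,w)=\Bigl(\frac{z}{1-zw}\Bigr)^{\!p}w^q=\frac{z^pw^q}{(1-zw)^{\max\{p,q\}}}=f_{p,q}(z,w),
\]
and the analogous computation with $\Psi_-$ handles the case $p<q$. I do not expect a genuine obstacle here: the conceptual work is already carried out in Propositions~\ref{prop:OmegaPM}, \ref{prop:Extension}, \ref{prop:Extension2} and Theorem~\ref{thm:HOmegaDecomposition2}, so the corollary reduces to this identification of basis elements plus a single application of Cauchy's formula.
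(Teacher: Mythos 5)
Your proposal is correct and follows essentially the same route as the paper: it derives the Schauder basis property and the rearrangement from Theorem \ref{thm:HOmegaDecomposition2} together with the identity $\Phi(u^pv^q)=f_{p,q}$ (i.e.\ \eqref{eq:Phi+}) and the absolute, locally uniform convergence established in the proof of Proposition \ref{prop:Extension}, and then obtains \eqref{eq:SchauderCoeff} from the inverse formula $\Phi^{-1}(f)=\pi_+(f\circ\Psi_+^{-1})+\pi_-(f\circ\Psi_-^{-1})$ and the two-variable Cauchy integral formula, with the correct inverses $\Psi_+^{-1}(z,w)=(z/(1+zw),w)$ and $\Psi_-^{-1}(z,w)=(z,w/(1+zw))$. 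This matches the paper's own argument in all essential steps.
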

Clearly,  $(f_{p,q})_{p<q}$ is a Schauder basis of $\mathcal{H}_-(\Omega)$ and
$(f_{p,q})_{q \le p}$ is a Schauder basis of $\mathcal{H}_+(\Omega)$.

\begin{remark}[Terminology]
  We briefly indicate why one might call $\mathcal{H}_-(\Omega)$ the past and $\mathcal{H}_+(\Omega)$ the future in $\mathcal{H}(\Omega)$. As described in the introduction, one motivation for our study of the Fr\'echet space
  $\mathcal{H}(\Omega)$ comes from the fact that the restriction to the diagonal yields the Fr\'echet algebra
  $\mathcal{A}(\D)$ for which~a continuous star product has been constructed in \cite{KrausRothSchoetzWaldmann}. In view of Theorem \ref{thm:HOmegaDecomposition}, the Fr\'echet algebra $\mathcal{A}(\D)$
  decomposes in
  $$ \mathcal{A}(\D)=\mathcal{A}_+(\D) \oplus \mathcal{A}_-(\D) \, $$ where
  each element $\phi$ in $\mathcal{A}_+(\D)$ has the form
  $$ \phi(r e^{it}) =\sum \limits_{p \ge q} a_{p,q} \frac{r^{p+q}}{(1-r^2)^p} e^{i (p-q) t} \, , \quad z=r e^{it} \in \D \, .$$
  Hence $t \mapsto \phi(r e^{it})$ has no negatively indexed Fourier coefficients and thus
  belongs to the Hardy space $H^2(\partial \D)$, see e.g.~\cite{HelsonSarason1967}.
\end{remark}

\section{Automorphisms and the invariant Laplacian of $\Omega$} \label{sec:Automorphisms}

The  complex manifold $\Omega$ has a rich and highly symmetric structure. In particular, $\Omega$ is a Stein manifold with the holomorphic density property, see \cite[Sec.~3]{KrausRothSchleissingerWaldmann}.
Roughly speaking, the density property implies by way of Andersen--Lempert theory \cite{Forstneric2011} that  $\Omega$ has a \textit{very} large automorphism group $\Aut(\Omega)$. In particular, $\Aut(\Omega)$ is infinite--dimensional. It is therefore of interest to characterize those subgroups of $\Aut(\Omega)$, whose elements have meaningful geometric or algebraic characteristic properties.
For instance, since one can think of $\Omega$ as a submanifold of the complex projective space ${\C}\operatorname{P}^3$, it is possible to speak about  \textit{rational} automorphisms of $\Omega$, and these have been completely characterized in \cite{Lamy2005}. In this section we give a characterization of those automorphisms of $\Omega$, which leave the Laplacian $\Delta_{zw}$ on $\Omega$ invariant in the following, very weak, sense.

\begin{definition}
  Let $O\subseteq \Omega$ be a non--empty open set and let  $T \colon O \to \Omega$  be a holomorphic mapping. We say that the operator $\Delta_{zw}$ is $T$--\textbf{invariant} if
  \begin{equation}           \label{eq:InvarianceHolomorphic}
    \Delta_{zw}(F \circ T)
    =
    \Delta_{zw}(F) \circ T \quad \text{for every $F \in \mathcal{H}(\Omega)$.}
\end{equation}
 \end{definition}

As it has been pointed out in Section \ref{sec:prelim}, the operator $\Delta_{zw}$ is $T$--invariant for every automorphism $T$ belonging to the M\"obius subgroup $\mathcal{M}$, see (\ref{eq:OmegaAutomorphismIntro}).
Since $\Aut(\Omega)$ is much larger than $\mathcal{M}$, one might ask if there exist other automorphisms $T$ for which $\Delta_{zw}$ is $T$--invariant. The following result shows that this is not the case: the M\"obius automorphisms of $\Omega$ are the \textit{only} locally defined holomorphic maps on $\Omega$, for which the Laplacian $\Delta_{zw}$ is invariant.

\begin{theorem} \label{thm:Invariance}
  Let  $O$ be a subdomain of $\Omega$, and let  $T \colon O \to \Omega$  be a holomorphic mapping. Then the following conditions are equivalent:
  \begin{itemize}
  \item[(a)] $\Delta_{zw}$ is $T$--invariant.
    \item[(b)] $T \in \mathcal{M}$.
    \end{itemize}
  \end{theorem}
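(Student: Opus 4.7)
The plan is to prove the nontrivial direction (a)$\Rightarrow$(b); the converse has been noted in Section~\ref{sec:prelim}. Writing $T=(T_1,T_2)$ with $T_1,T_2\in\mathcal{H}(O)$ and expanding $\Delta_{zw}(F\circ T)=\Delta_{zw}(F)\circ T$ by the chain rule, I get, for every $F\in\mathcal{H}(\Omega)$,
\begin{multline*}
(1-zw)^2\bigl[A\cdot(F_{zz}\circ T)+B\cdot(F_{zw}\circ T)+C\cdot(F_{ww}\circ T)\\
+D_1\cdot(F_z\circ T)+D_2\cdot(F_w\circ T)\bigr]=(1-T_1T_2)^2\cdot(F_{zw}\circ T),
\end{multline*}
where $A=\partial_zT_1\,\partial_wT_1$, $B=\partial_zT_1\,\partial_wT_2+\partial_zT_2\,\partial_wT_1$, $C=\partial_zT_2\,\partial_wT_2$, and $D_i=\partial_z\partial_wT_i$. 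By Corollary~\ref{cor:Schauder}, the $2$-jet map $\mathcal{H}(\Omega)\to\C^5$, $F\mapsto(F_z,F_w,F_{zz},F_{zw},F_{ww})(\xi_0,\eta_0)$, is surjective at every $(\xi_0,\eta_0)\in\Omega\cap\C^2$; hence at each $(z_0,w_0)\in O$ with $T(z_0,w_0)\in\Omega\cap\C^2$ the five coefficients are individually determined, forcing $A=C=D_1=D_2\equiv 0$ on $O$ together with the key identity
\begin{equation*}
(1-zw)^2 B=(1-T_1T_2)^2.\quad(\star)
\end{equation*}

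Next, the identity theorem on the connected domain $O$ applied to $A=(\partial_zT_1)(\partial_wT_1)\equiv 0$ forces $T_1$ to depend holomorphically on only one of the variables $z,w$; likewise for $T_2$. The two ``parallel'' cases in which $T_1,T_2$ depend on the same variable give $B\equiv 0$, so $(\star)$ yields $T_1T_2\equiv 1$, contradicting $T(O)\subseteq\Omega$. We are thus reduced to the ``diagonal case'' $T(z,w)=(T_1(z),T_2(w))$ or the symmetric ``swap case'' $T(z,w)=(T_1(w),T_2(z))$; I treat the former, the latter being entirely analogous.

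In the diagonal case $(\star)$ becomes $(1-zw)^2T_1'(z)T_2'(w)=(1-T_1(z)T_2(w))^2$. Picking $w_0$ with $T_2(w_0)\in\C$ and $T_2'(w_0)\neq 0$ (otherwise $T_2$ is constant and one is again in a contradictory situation) reduces this to the Riccati ODE $T_1'(z)=(1-T_2(w_0)T_1(z))^2/T_2'(w_0)$, whose general solution is a M\"obius transformation; hence $T_1\in\Aut(\hat\C)$, and by symmetry $T_2\in\Aut(\hat\C)$. Writing $T_1=(\alpha z+\beta)/(\gamma z+\delta)$, $T_2=(\alpha'w+\beta')/(\gamma'w+\delta')$, and clearing denominators, the functional equation turns into the polynomial identity
$$[(\gamma z+\delta)(\gamma'w+\delta')-(\alpha z+\beta)(\alpha'w+\beta')]^2=\Delta_1\Delta_2(1-zw)^2,$$
with $\Delta_i$ the relevant determinants. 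Extracting a square root shows that the bilinear curves $\{T_1(z)T_2(w)=1\}$ and $\{zw=1\}$ coincide in $\C^2$; evaluating along this common curve ($w=1/z$) produces $T_1(z)T_2(1/z)=1$, i.e.,\ $T_2(w)=1/T_1(1/w)$. Setting $\psi:=T_1\in\Aut(\hat\C)$ then places $T$ in the diagonal component of $\mathcal{M}$, and the swap case analogously yields $T(z,w)=(\psi(w),1/\psi(1/z))$ for some $\psi\in\Aut(\hat\C)$, the other component of~$\mathcal{M}$. I expect the most delicate step to be the initial decoupling of the five coefficient equations via Corollary~\ref{cor:Schauder}; everything beyond that is a routine ODE/algebraic analysis.
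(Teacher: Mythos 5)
Your argument is correct in outline, and it takes a genuinely different route from the paper's. The paper transfers the problem to the configuration space $G$, feeds four explicit global test functions ($1/(z-w)$, $1/(z-w)^2$ and, after a normalization, $z/(z-w)^2$, $z^2/(z-w)^2$) into the invariance identity, and extracts Proposition \ref{prop:Invariance2} and then Proposition \ref{prop:Invariance3} via the Schwarzian derivative. You instead decouple the chain--rule identity in one stroke: at points where $T$ is finite, surjectivity of the reduced $2$--jet map $F\mapsto (F_z,F_w,F_{zz},F_{zw},F_{ww})$ at $T(z_0,w_0)$ forces $A=C=D_1=D_2=0$ together with $(1-zw)^2B=(1-T_1T_2)^2$, which is exactly the pullback condition $T^*g_\Omega=g_\Omega$ of Theorem \ref{thm:AutOmega}(a); from there your separation of variables, the Riccati ODE (the route sketched in Remark \ref{rem:Moeb}(b)), and the zero--set comparison $\{T_1(z)T_2(w)=1\}=\{zw=1\}$ give $T_2(w)=1/T_1(1/w)$, hence $T\in\mathcal{M}$. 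In effect you show that the weak invariance (a) of Theorem \ref{thm:Invariance} already implies the local metric condition of Theorem \ref{thm:AutOmega}, and then you argue parallel to the paper's proof of that theorem, with a Riccati/zero--set argument replacing the appeal to Proposition \ref{prop:Invariance3}. What your route buys is a shorter, more conceptual path; what the paper's route buys is that it needs only four concrete global functions (no jet interpolation) and yields Propositions \ref{prop:Invariance2} and \ref{prop:Invariance3} (of Hawley--Schiffer type) as results of independent interest.

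The one step you must actually prove is the jet surjectivity, and citing Corollary \ref{cor:Schauder} alone does not do it: a closed subspace with a Schauder basis can perfectly well satisfy a universal differential relation at a point. The claim is true and can be justified with material at hand: at the origin the basis functions $f_{1,0},f_{0,1},f_{1,1},f_{2,0},f_{0,2}$ have reduced $2$--jets spanning $\C^5$ (direct computation), and an arbitrary point of $\Omega\cap\C^2$ is reached by composing with an element of $\mathcal{M}$, which preserves $\mathcal{H}(\Omega)$ and acts invertibly on these jets; alternatively, quote that $\Omega$ is Stein (Section \ref{sec:Automorphisms}), so finite jets at any point are realized by global holomorphic functions. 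This is precisely the delicate point Remark \ref{rem:TestFunctions} warns about, so it cannot be waved through. Minor repairs: your displayed Riccati equation drops the factor $(1-zw_0)^{-2}$ (harmless, since the corrected equation still has only M\"obius solutions, as the substitution $u=1-T_2(w_0)T_1$ shows); you should exclude the degenerate possibility $T_1\equiv\infty$ or $T_2\equiv\infty$ on $O$ (e.g.\ test invariance with $F=f_{1,1}$), so that the set where $T$ is finite is dense in $O$; and the conclusion, obtained on a polydisk inside $O$, extends to all of $O$ by the identity theorem.
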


In short: The M\"obius--invariant Laplacian is $T$--invariant if and only if $T$ is  M\"obius.

\medskip

Note that in Theorem \ref{thm:Invariance} we only assume  $T : O \to \Omega$ is defined in \textit{some} subdomain $O$ of~$\Omega$. It is to emphasize that the $T$--invariance condition (\ref{eq:InvarianceHolomorphic}) is very weak, since
we only assume this condition holds for functions $F$ which are defined and holomorphic on \textit{all} of $\Omega$ and not for functions which are holomorphic merely on some open subset of $\Omega$, see Remark \ref{rem:TestFunctions} below.
The crucial point is to make sure that $\mathcal{H}(\Omega)$ contains sufficiently many functions to ensure that condition (\ref{eq:InvarianceHolomorphic}) forces $T \in \mathcal{M}$.
In fact, condition (\ref{eq:InvarianceHolomorphic}) cannot be weakened any further, since there is no open set in $\hat{\C}^2$ which is strictly larger than $\Omega$ and carries  nonconstant holomorphic functions:

  \begin{theorem}   \label{thm:Invariance2}
    Let $F \in \mathcal{H}(\Omega)$. Suppose $\xi \in \partial \Omega$ and there is an open neighborhood $U \subseteq \hat{\C}^2$ of $\xi$ such that $F$ is bounded on $U$. Then $F$ is constant.
    \end{theorem}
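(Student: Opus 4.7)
The plan is to exploit the fact that $\partial\Omega = \{(z,w) \in \hat{\C}^2 : z \cdot w = 1\}$ is a smooth rational curve, parametrized by $\hat{\C} \ni \zeta \mapsto (\zeta, 1/\zeta)$. In particular, every horizontal projective line $\hat{\C} \times \{w_0\}$ meets $\partial\Omega$ at a \emph{single} point, and similarly for vertical lines $\{z_0\} \times \hat{\C}$. I will slice the given neighborhood $U$ of $\xi$ by a one--parameter family of such lines, thereby reducing the problem to a one--variable statement about bounded holomorphic functions on the Riemann sphere.

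Assume first that $\xi = (a, b)$ with $a, b \in \C \setminus \{0\}$ and $ab = 1$. Pick an open neighborhood $V \subseteq \C \setminus \{0\}$ of $b$ so small that $(1/b', b') \in U$ for every $b' \in V$. For each such $b'$, the horizontal line $\hat{\C} \times \{b'\}$ meets $\partial\Omega$ only at $(1/b', b')$, hence $z \mapsto F(z, b')$ is holomorphic on $\hat{\C} \setminus \{1/b'\}$ and bounded near $1/b'$ by hypothesis. Riemann's removable singularity theorem extends it to a function holomorphic on the compact manifold $\hat{\C}$, which is automatically constant by Liouville, say equal to $g(b')$. The assignment $w \mapsto F(0, w)$ is entire (since $(0,w) \in \Omega$ for every $w \in \C$) and agrees with $g$ on $V$; calling this entire extension $g$ as well, the difference $F(z,w) - g(w)$ is a holomorphic function on the connected domain $\Omega \cap \C^2$ that vanishes on the non--empty open subset $\Omega \cap (\C \times V)$. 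By the identity theorem, $F(z,w) = g(w)$ on all of $\Omega \cap \C^2$.

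To finish the generic case, recall that $(z_0, \infty) \in \Omega$ for every $z_0 \in \C \setminus \{0\}$ and that $F$ is holomorphic there; hence $g(w) = F(z_0, w) \to F(z_0, \infty) \in \C$ as $w \to \infty$. Thus $g$ is a bounded entire function, so constant by Liouville, and consequently $F$ is constant on $\Omega \cap \C^2$, and then on all of $\Omega$ by continuity.

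The two exceptional cases $\xi \in \{(0,\infty), (\infty, 0)\}$ are handled by exactly the same scheme, slicing by horizontal lines $\hat{\C} \times \{w'\}$ with $w'$ close to $\infty$ (respectively close to $0$); each slice still meets $\partial\Omega$ at a single point that lies in $U$ once the parameter is close enough to the relevant boundary value. Alternatively, since the M\"obius subgroup $\mathcal{M}$ acts transitively on $\partial\Omega$ (for any $\psi \in \Aut(\hat{\C})$ one has $T_\psi(0,\infty) = (\psi(0), 1/\psi(0))$, which ranges over all of $\partial\Omega$ as $\psi$ varies), one may simply replace $F$ by $F \circ T_\psi$ for a suitable $T_\psi \in \mathcal{M}$ and reduce to the generic case already treated. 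I do not expect a genuine obstacle: the entire content of the proof is the one--variable removability plus Liouville step, and the only technicality is the brief case distinction at the two points of $\partial\Omega$ where one coordinate sits at $0$ and the other at $\infty$.
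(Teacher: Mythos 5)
Your proposal is correct and takes essentially the same route as the paper: slice by lines meeting $\partial\Omega$ inside $U$, apply removability plus Liouville on each one--variable slice, and propagate constancy via the identity theorem on the connected set $\Omega$. The paper avoids your case distinction at $(0,\infty)$, $(\infty,0)$ and your second Liouville step for $g$ by slicing in \emph{both} variables inside a single product neighborhood of $\xi$ and matching the two families of constants, but this is only a cosmetic difference.
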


    The proof of the non--trivial implication ``(a) $\Longrightarrow$ (b)'' of Theorem \ref{thm:Invariance} will be given in Section~\ref{sec:proof2}. It is rather long and is therefore divided into several steps which might be of independent interest. We therefore briefly outline the idea of the proof of Theorem \ref{thm:Invariance}. The situation becomes somewhat simpler on the second configuration space $G$ of $\hat{\C}$, see Section~\ref{sub:conf}. By  mapping $\Omega$ biholomorphically onto $G$ as explained in Section \ref{sub:conf}, it is easy to check that Theorem \ref{thm:Invariance} is equivalent to

    \begin{theorem}[Invariance of the Laplacian on the second configuration space] \label{thm:InvarianceConf}
      Let $O$ be a subdomain of $G$ and $T=(T_1,T_2) : O \to G$ a holomorphic mapping. Then the following conditions are equivalent:
      \begin{itemize}
      \item[(a)] (The Laplacian on $G$ is $T$--invariant)\\
        For every $F \in \mathcal{H}(G)$ and all $(z,w) \in O$,
        \begin{equation} \label{eq:aut1}
  (z-w)^2 \partial_z\partial_w \left(F \circ T\right)(z,w)=\left(T_1(z,w)-T_2(z,w)\right)^2 (\partial_z\partial_w F)(T(z,w))
\end{equation}
\item[(b)] There exists  ${\psi \in \Aut(\hat{\C})}$ such that  $T(z,w)=\left(\psi(z),\psi(w)\right)$
or $T(z,w)= \left(\psi(w),\psi(z)\right)$.
        \end{itemize}
      \end{theorem}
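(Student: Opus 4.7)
The nontrivial direction is (a) $\Rightarrow$ (b), for which I proceed as follows. Shrinking $O$ so that both $O$ and $T(O)$ lie in $G \cap \C^2$ (the full conclusion will then be recovered, since M\"obius pairs on a subdomain of $\hat{\C}^2$ extend uniquely to all of $\hat{\C}^2$), I may view $T_1, T_2$ as $\C$-valued holomorphic functions of $(z,w)$. The chain rule produces, for every $F \in \mathcal{H}(G)$,
\begin{align*}
\partial_z \partial_w (F \circ T) = &\; F_{11}(T)\,\partial_z T_1\, \partial_w T_1 + F_{22}(T)\,\partial_z T_2\, \partial_w T_2 \\
& + F_{12}(T)\bigl(\partial_z T_1\, \partial_w T_2 + \partial_z T_2\, \partial_w T_1\bigr) \\
& + F_1(T)\,\partial_z \partial_w T_1 + F_2(T)\,\partial_z \partial_w T_2,
\end{align*}
so (\ref{eq:aut1}) becomes a linear relation in the five quantities $F_1(T), F_2(T), F_{11}(T), F_{12}(T), F_{22}(T)$. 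The Fr\'echet space $\mathcal{H}(G)$ separates $2$-jets at every point of $G$: for instance, the six rational functions $1,\, z/(z-w),\, w/(z-w),\, z^2/(z-w)^2,\, zw/(z-w)^2,\, w^2/(z-w)^2$ all belong to $\mathcal{H}(G)$, and their $2$-jets span $\C^6$ at any fixed point of $G \cap \C^2$ (a flip-chart variant handles points with a coordinate at infinity). Consequently each coefficient in the relation must vanish pointwise on $O$, and cancelling the nonvanishing factor $(z-w)^2$ yields the system
\begin{gather*}
\partial_z \partial_w T_1 = \partial_z \partial_w T_2 = 0, \qquad \partial_z T_1\, \partial_w T_1 = \partial_z T_2\, \partial_w T_2 = 0, \\
(z-w)^2\bigl(\partial_z T_1\, \partial_w T_2 + \partial_z T_2\, \partial_w T_1\bigr) = (T_1 - T_2)^2.
\end{gather*}

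The mixed-partial vanishing forces $T_i(z,w) = a_i(z) + b_i(w)$ with $a_i, b_i$ holomorphic, and the product conditions combined with the identity theorem on the connected domain $O$ give, for each $i$, either $a_i' \equiv 0$ or $b_i' \equiv 0$. Of the four resulting cases, two make $T$ depend on only one of the variables and collapse the cross equation to $0 = (T_1 - T_2)^2$, contradicting $T(O) \subset G$. The other two cases are interchanged by the involution $(z,w) \mapsto (w,z)$, so I treat only $T(z,w) = (\alpha(z), \delta(w))$, for which the cross equation reads
\begin{equation*}
(z-w)^2\, \alpha'(z)\, \delta'(w) = (\alpha(z) - \delta(w))^2.
\end{equation*}
Both sides are holomorphic across the diagonal in $\C^2$, so restricting to $z = w$ yields $\alpha = \delta =: \psi$. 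Expanding $(\psi(z) - \psi(w))^2 = (z-w)^2\, \psi'(z)\, \psi'(w)$ in a Taylor series around $w$ and comparing the coefficient of $(z-w)^4$ produces $\psi'''\psi' = \tfrac{3}{2}(\psi'')^2$, i.e.\ the vanishing of the Schwarzian derivative $\{\psi\} \equiv 0$, which characterizes $\psi$ as a M\"obius transformation. The symmetric case gives $T(z,w) = (\psi(w), \psi(z))$, which is the second alternative in (b).

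The principal obstacle is the $2$-jet separation in $\mathcal{H}(G)$ at every point of $G$, including those with a coordinate at infinity; this is handled by the two-chart structure of $G \cong \Omega$ and, ultimately, by the explicit Schauder basis of Corollary \ref{cor:Schauder}, whose pull-backs to $G$ supply globally defined rational functions whose jets at any fixed point of $G$ span the required jet space. Once $2$-jet separation is secured, the remainder is an elementary case analysis followed by the classical Schwarzian characterization of M\"obius transformations.
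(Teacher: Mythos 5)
The decisive step of your argument fails at the point where you pass to the diagonal. After separating variables you have $T_1(z,w)=\alpha(z)$, $T_2(z,w)=\delta(w)$ and the identity $(z-w)^2\,\alpha'(z)\,\delta'(w)=(\alpha(z)-\delta(w))^2$, but this identity is only known on (a product subdomain $D_1\times D_2$ of) $O\subseteq G$, and any product $D_1\times D_2\subseteq G$ forces $D_1\cap D_2=\emptyset$: if $z_0\in D_1\cap D_2$ then $(z_0,z_0)\in G$, which is false. So $\alpha$ and $\delta$ are a priori defined on disjoint sets, the diagonal $z=w$ is not in the domain of validity, and the assertions ``both sides are holomorphic across the diagonal, so restricting to $z=w$ yields $\alpha=\delta$'' and the subsequent Taylor expansion of $(\psi(z)-\psi(w))^2=(z-w)^2\psi'(z)\psi'(w)$ around $z=w$ are not available. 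This is exactly the subtlety flagged in Remark \ref{rem:HawleySchiffer}: the Hawley--Schiffer situation ($D_1=D_2$, $H_1=H_2$ from the outset) does not apply here. The correct route (Proposition \ref{prop:Invariance3}) works entirely off the diagonal: fix $w\in D_2$ and differentiate the functional equation twice in $z$ to get $\Schwarzian_{\alpha}\equiv 0$ (and likewise for $\delta$), conclude both are M\"obius or constant, and only then deduce $\alpha=\delta$ from the global fact that the right--hand side never vanishes, so $\alpha(z)\neq\delta(w)$ for all $z\neq w$ in $\hat{\C}$. Your proof needs this replacement; as written the equality $\alpha=\delta$ and the vanishing of the Schwarzian are both unproved.

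A second, repairable, flaw: your explicit jet--separation family does not do what you claim. The six functions $1$, $z/(z-w)$, $w/(z-w)$, $z^2/(z-w)^2$, $zw/(z-w)^2$, $w^2/(z-w)^2$ satisfy the linear relations
\begin{equation*}
\frac{z}{z-w}-\frac{w}{z-w}=1,
\qquad
\frac{z^2}{(z-w)^2}-\frac{2zw}{(z-w)^2}+\frac{w^2}{(z-w)^2}=1,
\end{equation*}
so their span has dimension at most $4$ and their $2$--jets cannot span $\C^6$ at any point. The underlying claim that $\mathcal{H}(G)$ separates the five derivative components $(F_z,F_w,F_{zz},F_{zw},F_{ww})$ at each point of $G$ is nevertheless true (for instance because $G\cong\Omega$ is Stein, or by a corrected explicit list such as $1/(z-w)$, $1/(z-w)^2$, $z/(z-w)^2$, $z^2/(z-w)^2$, $w/(z-w)^2$, $w^2/(z-w)^2$, after checking independence), so this part can be fixed; it is also worth noting that, once repaired, your jet argument yields the separation of variables in one stroke, whereas the paper extracts it in two stages with hand--picked test functions and a M\"obius change of variables. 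But without a correct treatment of the functional equation on disjoint domains, the proof does not go through.
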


      The crucial  part of the proof of Theorem \ref{thm:InvarianceConf} is to show that any holomorphic mapping $$T : O \to G \, , \quad  (z,w) \mapsto T(z,w)=\begin{pmatrix} T_1(z,w) \\ T_2(z,w) \end{pmatrix} \, , $$
      for which the $T$--invariance property (\ref{eq:aut1}) holds has  ``separated variables'' in the sense that either $T_1$ depends only on $z$ and $T_2$ depends only on $w$,  or vice versa. By carefully choosing appropriate ``test functions'' $F \in \mathcal{H}(G)$, we shall show as a first step that 
      the \textit{difference} $T_1(z,w)-T_2(z,w)$ can be expressed as  the \textit{difference} $H_1(z)-H_2(w)$ of a holomorphic function $H_1$ depending only on $z$ and another holomorphic function $H_2$ depending only on  $w$. In addition,  the $T$--invariance property (a) in Theorem \ref{thm:InvarianceConf} then translates into an ODE--condition for $H_1$ and $H_2$:
     \begin{proposition}  \label{prop:Invariance2}
      Let  $O\subseteq G$ be a non--empty open set and let  $T \colon O \to G$  such that the $T$--invariance condition (\ref{eq:aut1}) holds. Then there exist subdomains $D_1,D_2$ of $\C$ with $D_1 \times D_2 \subseteq O$ and holomorphic functions $H_1 \in \mathcal{H}(D_1)$, $H_2 \in \mathcal{H}(D_2)$ such that
      $T_1(z,w)-T_2(z,w)=H_1(z)-H_2(w)$ and
      \begin{equation} \label{eq:schwarzeq1}
                 \left( \frac{H_1(z)-H_2(w)}{z-w} \right)^2
        =
        H_1'(z)
        H_2'(w)
           \end{equation}
   for all $(z,w) \in D_1 \times D_2$.
 \end{proposition}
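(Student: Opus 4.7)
The plan is to probe the invariance identity (\ref{eq:aut1}) with the one-parameter family of test functions
\begin{equation*}
F_k(z,w) := (z-w)^{-k}, \qquad k = 1, 2,
\end{equation*}
and to use the resulting pair of constraints on $u := T_1 - T_2$ to decouple the nonlinear quantities $u\,\partial_z\partial_w u$ and $(\partial_z u)(\partial_w u)$. First I would verify that $F_k \in \mathcal{H}(G)$: the only potentially troublesome points of $G$ have the form $(\infty, w_0)$ or $(z_0, \infty)$ with $w_0, z_0 \in \C$, and there the change of variable $\tilde z = 1/z$ (respectively $\tilde w = 1/w$) turns $F_k$ into $\tilde z^k (1 - w \tilde z)^{-k}$, which is holomorphic at $\tilde z = 0$. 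A direct computation shows that each $F_k$ is an eigenfunction of the principal symbol occurring in (\ref{eq:aut1}): $(z-w)^2 \partial_z \partial_w F_k = -k(k+1) F_k$.

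Next I would fix a bidisk $D_1 \times D_2 \subseteq O \cap (\C \times \C)$ on which both $T_1$ and $T_2$ are finite, which exists because $\C^2$ is dense in $\hat{\C}^2$ and the polar divisors of $T_1$ and $T_2$ are proper analytic subsets of $O$. On this bidisk, set $u := T_1 - T_2$; since $T$ maps into $G$, $u$ has no zeros there. Substituting $F_k$ into (\ref{eq:aut1}), applying the chain rule to $F_k \circ T = u^{-k}$, and clearing denominators yields
\begin{equation*}
(z-w)^2 \bigl[ u \cdot \partial_z \partial_w u \;-\; (k+1)(\partial_z u)(\partial_w u) \bigr] = (k+1)\, u^2 \qquad (\ast_k).
\end{equation*}
Because $(\ast_k)$ depends affinely on $k$, the two equations $(\ast_1)$ and $(\ast_2)$ are linearly independent: subtracting them yields $(\partial_z u)(\partial_w u) = -u^2/(z-w)^2$, and reinserting this into $(\ast_1)$ and cancelling the nowhere-vanishing factor $u$ forces $\partial_z \partial_w u = 0$ on $D_1 \times D_2$.

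From $\partial_z \partial_w u = 0$ on a bidisk I would invoke the standard integration argument to write $u(z,w) = H_1(z) - H_2(w)$ for some $H_1 \in \mathcal{H}(D_1)$ and $H_2 \in \mathcal{H}(D_2)$, which gives the first conclusion of the proposition. Substituting $\partial_z u = H_1'(z)$ and $\partial_w u = -H_2'(w)$ back into the relation $(\partial_z u)(\partial_w u) = -u^2/(z-w)^2$ then delivers the Schwarz-type equation (\ref{eq:schwarzeq1}). The principal obstacle is purely computational bookkeeping — correctly tracking signs and verifying the eigenvalue $-k(k+1)$ in the chain rule for $u^{-k}$; the conceptual content, that two eigenvalues of the principal symbol suffice to decouple the wave equation from the Schwarz-type constraint, is transparent once $(\ast_k)$ is in hand.
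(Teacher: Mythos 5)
Your proposal is correct and follows essentially the same route as the paper's proof: the paper also probes \eqref{eq:aut1} with the test functions $1/(z-w)$ and $1/(z-w)^2$, combines the two resulting identities to get $(T_1-T_2)\,\partial_z\partial_w(T_1-T_2)=0$, hence $\partial_z\partial_w(T_1-T_2)=0$, splits $T_1-T_2=H_1(z)-H_2(w)$ on a bidisk $D_1\times D_2\subseteq O\cap\C^2$, and substitutes back to obtain \eqref{eq:schwarzeq1}. Your packaging via the eigenvalue identity $(z-w)^2\partial_z\partial_w F_k=-k(k+1)F_k$ and the one-parameter family $(\ast_k)$ is just a streamlined bookkeeping of the same computation, and your explicit checks (that $F_k\in\mathcal{H}(G)$ and that one may choose the bidisk where $T_1,T_2$ are finite) are details the paper passes over silently.
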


In a next step, we show that equation (\ref{eq:schwarzeq1}) puts severe constraints on the functions $H_1$ and~$H_2$:

\begin{proposition} \label{prop:Invariance3}
 Let $D_1, D_2$ be open sets in $\C$. Suppose $H_1 \in \mathcal{H}(D_1)$ and ${H_2 \in \mathcal{H}(D_2)}$ satisfy condition (\ref{eq:schwarzeq1}) for all $(z,w) \in D_1 \times D_2$.
  Then  either $H_1,H_2 \in \Aut(\hat{\C})$ or $H_1, H_2$ are constant. In both cases,  $H_1=H_2$.
  \end{proposition}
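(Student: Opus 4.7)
The plan is to freeze the $w$--variable in (\ref{eq:schwarzeq1}), turning the functional equation into an integrable first--order ODE for $H_1$; solving it explicitly will force $H_1$ to be a M\"obius transformation. Without loss of generality, I may assume $D_1, D_2$ are connected, arguing component by component otherwise. First I dispose of the degenerate cases. If $H_1'$ vanishes at some $z_0 \in D_1$, evaluating (\ref{eq:schwarzeq1}) at $(z_0, w)$ yields $(H_1(z_0) - H_2(w))^2 = 0$ for every $w \in D_2$, so $H_2 \equiv H_1(z_0)$ is constant. Feeding this back into (\ref{eq:schwarzeq1}) in turn forces $H_1 \equiv H_1(z_0)$ on $D_1$, landing both functions in the common--constant case of the statement. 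The same conclusion follows if $H_2'$ vanishes somewhere. Henceforth I may assume $H_1'$ and $H_2'$ have no zero on $D_1$ and $D_2$, respectively; as a by--product, $H_1(z) \ne H_2(w)$ throughout $D_1 \times D_2$.

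Now fix $w_0 \in D_2$ and set $a := H_2(w_0)$, $b := H_2'(w_0) \ne 0$. Specialising (\ref{eq:schwarzeq1}) to $w = w_0$ gives
$$\frac{H_1'(z)}{(H_1(z) - a)^2} = \frac{1}{b(z - w_0)^2}\, , \quad z \in D_1,$$
which is the exact--derivative identity $-\frac{d}{dz}\bigl(H_1(z) - a\bigr)^{-1} = -\frac{d}{dz}\bigl(b(z - w_0)\bigr)^{-1}$. Integration produces a constant $C \in \C$ with $(H_1(z) - a)^{-1} = (b(z - w_0))^{-1} + C$, hence
$$H_1(z) = a + \frac{b(z - w_0)}{1 + Cb(z - w_0)}\, .$$
The associated $2 \times 2$ coefficient matrix has determinant $b \ne 0$, so this formula defines a genuine M\"obius transformation of $\hat{\C}$, and thus $H_1 \in \Aut(\hat{\C})$. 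Running the same argument with the roles of $H_1, H_2$ swapped shows $H_2 \in \Aut(\hat{\C})$.

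Finally, both sides of the identity $(H_1(z) - H_2(w))^2 = (z-w)^2 H_1'(z) H_2'(w)$ are now rational functions on $\hat{\C}^2$ that coincide on the open set $D_1 \times D_2$, hence on all of $\hat{\C}^2$ by the identity principle. Evaluating on the diagonal $z = w$ at any point where both sides are finite forces $H_1(z) = H_2(z)$, yielding $H_1 = H_2$. The crux of the argument is the Riccati step: once one recognises that freezing $w_0$ linearises (\ref{eq:schwarzeq1}) under the substitution $u = (H_1 - a)^{-1}$ into an elementary ODE, both the M\"obius classification and the coincidence $H_1 = H_2$ drop out with essentially no further work.
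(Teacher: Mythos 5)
Your proof is correct, and it reaches the conclusion by a genuinely different route than the paper's main argument. The paper fixes $w \in D_2$ and differentiates \eqref{eq:schwarzeq1} twice with respect to $z$, concluding that $H_1$ has vanishing Schwarzian derivative away from the zeros of $H_1'$, and then quotes the classical fact that this characterizes M\"obius transformations; the equality $H_1 = H_2$ is then deduced from the observation that the right--hand side of \eqref{eq:schwarzeq1} never vanishes, so $H_1(z) \neq H_2(w)$ for $z \neq w$, which for M\"obius maps forces $H_1 = H_2$. You instead freeze $w_0 \in D_2$ and integrate the resulting exact first--order ODE directly -- legitimate because your preliminary step (a zero of $H_1'$ or $H_2'$ forces both functions to be the same constant) guarantees $H_1(z) \neq a = H_2(w_0)$, so the division by $(H_1(z)-a)^2$ is allowed -- and obtain an explicit M\"obius formula with determinant $b \neq 0$. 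This is essentially a streamlined version of the Riccati alternative the paper records in Remark \ref{rem:Moeb}(b): there the equation is brought to the normal form $u' = u^2$, whereas you observe that under $u = (H_1 - a)^{-1}$ it is already an equality of exact derivatives. What your route buys is self--containedness (no appeal to the Schwarzian characterization of M\"obius maps) and an explicit formula; what the paper's route buys is brevity and freedom from bookkeeping with integration constants and connectedness (your reduction to connected components, and the gluing of the conclusions across components through the common function $H_2$, works but deserves the sentence you gave it). Your concluding step -- extending \eqref{eq:schwarzeq1} to all of $\hat{\C}^2$ by rationality and evaluating on the diagonal -- is a clean and correct substitute for the paper's nonvanishing argument.
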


\begin{remark} \label{rem:HawleySchiffer}
As we shall explain in Remark \ref{rem:Moeb} (a), Proposition \ref{prop:Invariance3} can be seen as a generalization of a result of Hawley and Schiffer \cite{HawleySchiffer1966}. Their result is the special case of Proposition \ref{prop:Invariance3} when $D_1=D_2$ and $H_1=H_2$ is assumed from the outset. Note however that for applying Proposition \ref{prop:Invariance3} in the proof of Theorem \ref{thm:InvarianceConf}, we need $D_1 \times D_2 \subseteq G$ and hence $D_1 \cap D_2=\emptyset$. In particular, the result of Hawley and Schiffer does not suffice for proving Theorem \ref{thm:InvarianceConf}.
\end{remark}

Taking Propositions \ref{prop:Invariance2} and \ref{prop:Invariance3} together shows that for a holomorphic mapping $T : O \to G$ with  the $T$--invariance property (\ref{eq:aut1}) there is $H_1 \in \Aut(\hat{\C})$ such that
\begin{equation} \label{eq:cond12}
    T_1(z,w)-T_2(z,w)=H_1(z)-H_1(w) \, \quad \text{ for all } (z,w) \in O \, .
    \end{equation}
The main trick now is to realize that the change of variables 
 $(z,w) \mapsto \left(H^{-1}_1(z),H^{-1}_2(w)\right)$  is permissible, which simplifes condition (\ref{eq:cond12})  to
$$  T_1(z,w)-T_2(z,w)=z-w \, , $$
or -- equivalently -- the $T$--invariance condition (\ref{eq:aut1}) becomes
\begin{equation} \label{eq:test}
  \partial_z\partial_w (F \circ T)(z,w)=\left(\partial_z\partial_w F \right)\left(T(z,w) \right) \, .
  \end{equation}
Again choosing appropriate ``test functions'' $F \in \mathcal{H}(G)$, we finally show that (\ref{eq:test}) implies $T_1$ is either independent of $z$ or independent of $w$. From here, it is then immediate that $T_1$ and $T_2$ are Möbius.

\begin{remark}\label{rem:Invariance}
  If we restrict in Theorem \ref{thm:Invariance} to the ``diagonal'' $\{(z,\overline{z}) \, : \, z \in \D\} \subset \Omega$ which we identify with the unit disk $\D$ and to the ``rotated diagonal''
    {$\{(z,-\overline{z}) \, : \, z \in \hat{\C}\} \subset \Omega$} which we identify with the Riemann sphere $\hat{\C}$, we get:
      \begin{itemize}
    \item[(i)]
  Let $O$ be a subdomain of $\D$, and $T : O \to \D$ holomorphic. Then  the following conditions are equivalent:
  \begin{itemize}
  \item[(a)] $\Delta_{\D}$ is $T$--invariant, that is,  $\Delta_{\D}\left( f \circ T \right)=\Delta_\D f \circ T$
    for all $f \in \A(\D)$.
    \item[(b)] $T \in \Aut(\D)$.
    \end{itemize}
 \item[(ii)]
  Let $O$ be a subdomain of $\hat{\C}$, and $T : O \to \hat{\C}$ holomorphic. Then  the following conditions are equivalent:
  \begin{itemize}
  \item[(a)] $\Delta_{\hat{\C}}$ is $T$--invariant, that is,  $\Delta_{\hat{\C}}\left( f \circ T \right)=\Delta_{\hat{\C}} f \circ T$
    for all $f \in \A(\hat{\C})$.
    \item[(b)] $T \in \Rot(\hat{\C})$.
    \end{itemize}
    \end{itemize}
Similar to Theorem \ref{thm:Invariance}, in both cases $X=\D$ or $X=\hat{\C}$, condition (a) is fairly weak in the sense that we assume $\Delta_{X} (f \circ T)=\Delta_{X}f \circ T$ only for functions $f$ belonging to the set $\A(X)$, which is~a comparatively small subset of the set of all real--analytic  functions on $X$.
  \end{remark}

\section{Automorphisms and a  canonical  holomorphic Riemannian metric on  $\Omega$} \label{sec:RiemannianMetric}

Returning to Theorem \ref{thm:Invariance} we now add a third equivalent condition in pure differential geometric terms.
For this purpose we define a holomorphic analogue $g_\Omega$ of the hyperbolic and spherical Riemannian metric on $\Omega$ in local coordinates by
\begin{equation}
    \label{eq:MetricOmega}
    g_\Omega(z,w)
    =
    \frac{1}{(1-zw)^2}
    dz \vee dw \, .
\end{equation}
 It is easy to check that formula (\ref{eq:MetricOmega}) defines a  non-vanishing and holomorphic section. In the literature, such a section is referred to as \emph{holomorphic Riemannian metric} on $\Omega$, see e.g.~\cite{Manin1988}.
It is not difficult to see that the (complex--valued) Riemannian Laplacian corresponding to $g_\Omega$ is exactly $\Delta_{zw}$.

\begin{theorem}\label{thm:AutOmega}
Let $O$ be a subdomain of $\Omega$, and let $T: O \to \Omega$ be a holomorphic mapping. Then the following conditions are equivalent:
   \begin{enumerate}
       \item[(a)] $T^*g_\Omega = g_\Omega$ as two--forms.
       \item[(b)] $T \in \mathcal{M}$.
   \end{enumerate}
\end{theorem}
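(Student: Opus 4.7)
The plan is to handle (b) $\Rightarrow$ (a) by direct computation and to reduce (a) $\Rightarrow$ (b) to Theorem \ref{thm:Invariance}. The bridge between the two conditions is the observation that $\Delta_{zw}$ coincides, up to a nonzero scalar, with the holomorphic Laplace--Beltrami operator $\Delta_{g_\Omega}$ associated to $g_\Omega$. Once this identification is in place, any $T$ satisfying $T^* g_\Omega = g_\Omega$ is a holomorphic isometry of $(\Omega, g_\Omega)$ and hence commutes with $\Delta_{g_\Omega}$, so $\Delta_{zw}$ is $T$--invariant and Theorem \ref{thm:Invariance} yields $T \in \mathcal{M}$.

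For (b) $\Rightarrow$ (a), I would verify the pullback identity on the two prototypes of elements of $\mathcal{M}$. For $T(z,w) = (\psi(z),\, 1/\psi(1/w))$ with $\psi \in \Aut(\hat{\C})$ normalized to unit determinant, the M\"obius identity
$$\psi(u) - \psi(z) = \pm \sqrt{\psi'(u)\psi'(z)}\,(u - z),$$
together with the substitution $u = 1/w$, reduces $T^* g_\Omega$ to $g_\Omega$ after a short manipulation; the ``swapped'' case is entirely analogous. For (a) $\Rightarrow$ (b), the central computation is that in the standard chart the matrix of $g_\Omega$ is off--diagonal with entries proportional to $(1-zw)^{-2}$; its inverse matrix has off--diagonal entries proportional to $(1-zw)^{2}$ and its determinant is a nonvanishing multiple of $(1-zw)^{-4}$. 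Substituting into the holomorphic Laplace--Beltrami formula $\Delta_g F = (\det g)^{-1/2} \partial_i\bigl( (\det g)^{1/2} g^{ij} \partial_j F \bigr)$ and noting that the locally chosen holomorphic square root of $\det g$ cancels out yields $\Delta_{g_\Omega} F = c\,(1-zw)^{2}\, \partial_z\partial_w F$ for a nonzero constant $c$, so $\Delta_{g_\Omega}$ is indeed proportional to $\Delta_{zw}$.

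The step requiring most care is justifying that a \emph{holomorphic} isometry $T$ of $(\Omega, g_\Omega)$ intertwines $\Delta_{g_\Omega}$, without appealing directly to standard real--Riemannian machinery. This is a local coordinate computation in which the locally defined branches of $\sqrt{\det g}$ transform by the (holomorphic) Jacobian determinant of $T$ under pullback; since $\Delta_{g_\Omega}$ is itself independent of the branch choice by the calculation above, the intertwining follows by a routine change of variables. A fully self--contained alternative that bypasses Theorem \ref{thm:Invariance} would instead expand $T^* g_\Omega = g_\Omega$ in coordinates: vanishing of the pure $dz \vee dz$ and $dw \vee dw$ components forces $\partial_z T_1\, \partial_z T_2 \equiv 0$ and $\partial_w T_1\, \partial_w T_2 \equiv 0$, so by analyticity $T$ has ``separated variables''; the substitution $w \mapsto 1/w$ then transforms the remaining mixed--product equation into precisely the Schwarzian-type identity of Proposition \ref{prop:Invariance3}, from which $T \in \mathcal{M}$ follows directly.
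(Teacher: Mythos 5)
Your proposal is correct, and its primary route to ``(a) $\Longrightarrow$ (b)'' is genuinely different from the paper's. The paper argues directly: working in the standard chart, it expands $T^*g_\Omega$ in the basis $dz^2$, $dz \vee dw$, $dw^2$, deduces from the vanishing of the pure terms (together with the nonvanishing of the mixed term, which excludes a constant component) that $T$ has separated variables, and then reads the remaining equation
\begin{equation*}
H_1'(z)\,H_2'(w)=\frac{\big(1-H_1(z)H_2(w)\big)^2}{(1-zw)^2}
\end{equation*}
as the $\Omega$--version of \eqref{eq:schwarzeq1}, so that Proposition \ref{prop:Invariance3} yields $H_1,H_2\in\Aut(\hat{\C})$ with $H_2(w)=1/H_1(1/w)$, i.e.\ $T\in\mathcal{M}$ --- this is precisely your ``fully self--contained alternative''. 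Your main argument instead identifies $\Delta_{zw}$, up to a nonzero constant, with the Laplace--Beltrami operator of $g_\Omega$ and invokes naturality of that operator under local holomorphic isometries (note that $T^*g_\Omega=g_\Omega$ forces $dT$ to be invertible, so $T$ is locally biholomorphic), whence \eqref{eq:InvarianceHolomorphic} holds on $O$ and Theorem \ref{thm:Invariance} finishes the proof. This is logically sound and not circular, since Theorem \ref{thm:Invariance} is established independently via Theorem \ref{thm:InvarianceConf} and Propositions \ref{prop:Invariance2} and \ref{prop:Invariance3}, and it is conceptually appealing because it exploits exactly the relation the paper points out after Theorem \ref{thm:AutOmega}: metric invariance is a much stronger, pointwise condition than the weak invariance \eqref{eq:InvarianceHolomorphic}. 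What it costs is that the delicate step you flag must really be carried out in the holomorphic category --- the coordinate formula for $\Delta_{g_\Omega}$ and the intertwining under local isometries; this is routine (e.g.\ via $\Delta_g F=g^{ij}\big(\partial_i\partial_j F-\Gamma^k_{ij}\partial_k F\big)$, which avoids any branch of $\sqrt{\det g}$), but it is an extra layer of machinery, and it uses the full strength of Theorem \ref{thm:Invariance} where the paper's one--page computation only needs Proposition \ref{prop:Invariance3}.

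Two minor remarks. First, your coefficient identities $\partial_z T_1\,\partial_z T_2\equiv 0$ and $\partial_w T_1\,\partial_w T_2\equiv 0$ are indeed the correct $dz^2$ and $dw^2$ coefficients of $dT_1\vee dT_2$ (the corresponding display in the paper contains a slip here, though its conclusion is unaffected); to pass from them to separated variables one should, as the paper does, also use the nonvanishing of the $dz\vee dw$ coefficient to rule out the cases where a component of $T$ is constant, and then treat the swapped case $T_1=T_1(w)$, $T_2=T_2(z)$ alongside. Second, your verification of ``(b) $\Longrightarrow$ (a)'' via $\big(\psi(u)-\psi(z)\big)^2=\psi'(u)\psi'(z)(u-z)^2$ with $u=1/w$ is exactly the routine check the paper omits.
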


Note that condition (a) of Theorem \ref{thm:Invariance} is considerably weaker than condition (a) of Theorem~\ref{thm:AutOmega}: the invariance of the two-form is a local condition, whereas \eqref{eq:InvarianceHolomorphic} merely poses a condition on \emph{globally} defined holomorphic functions.

\begin{remark}\label{rem:Invariance2}
Let $$\lambda_{\D}(z):=\frac{1}{1-|z|^2} \qquad \text{ and } \qquad \lambda_{\hat{\C}}(z)=\frac{1}{1+|z|^2}$$ denote the density of the Poincar\'e metric on $\D$ and  the density of the spherical metric on $\hat{\C}$.
  If we restrict in Theorem \ref{thm:AutOmega} to the ``diagonal'' $\{(z,\overline{z}) \, : \, z \in \D\} \subset \Omega$ which we identify with the unit disk $\D$ resp.~to the ``rotated diagonal''
    {$\{(z,-\overline{z}) \, : \, z \in \hat{\C}\} \subset \Omega$} which we identify with the Riemann sphere $\hat{\C}$, we get
      \begin{itemize}
    \item[(i)]
  Let $O$ be a subdomain of $\D$ and $T : O \to \D$ holomorphic. Then  the following conditions are equivalent:
  \begin{itemize}
  \item[(a)] $T^*\lambda_{\D}=\lambda_{\D}$.
    \item[(b)] $T \in \Aut(\D)$.
    \end{itemize}
 \item[(ii)]
  Let $O$ be a subdomain of $\hat{\C}$ and $T : O \to \hat{\C}$ holomorphic. Then  the following conditions are equivalent:
  \begin{itemize}
  \item[(a)] $T^*\lambda_{\hat{\C}}=\lambda_{\hat{\C}}$.
    \item[(b)] $T \in \Rot(\hat{\C})$.
    \end{itemize}
    \end{itemize}
  \end{remark}

\section{Proofs of Propositions \ref{prop:Extension} and \ref{prop:Extension2}, Theorems \ref{thm:HOmegaDecomposition} and \ref{thm:HOmegaDecomposition2}, and Corollary \ref{cor:Schauder}} \label{sec:proof}

\subsection*{Proof of Proposition \ref{prop:Extension}}
We only prove part (a) since the proof of (b) merely requires some obvious modifications.
For every  $F^+ \in \mathcal{H}_+(\C^2)$ there are coefficients $a_{p,q}(F^+) \in \C$ such that
\begin{equation} \label{eq:F+}
  F^+(u,v)=\sum \limits_{p \ge q} a_{p,q}(F^+) u^p v^q \, ,
  \end{equation}
  and the series converges uniformly and absolutely for $(u,v)$ in  any compact subset of $\C^2$.
  Since $\Psi_+ : \Omega_+ \to \C^2$ is biholomorphic, it follows from Proposition \ref{prop:OmegaPM} that
  \begin{equation} \label{eq:SeriesHOmega+}
     \mathcal{C}_{\Psi_+}(F^+)(z,w)= \left(F^+ \circ \Psi_+\right)(z,w)=\sum \limits_{p \ge q} a_{p,q}(F^+) \frac{z^pw^q}{(1-zw)^p} \,
      \end{equation}
  for all $(z,w) \in \Omega_+$, and the convergence is absolute and locally uniform in $\Omega_+$.
  Note that each individual term of the series in (\ref{eq:SeriesHOmega+}) has the form $a_{p,q}(F^+) f_{p,q}$ and is therefore a holomorphic function
  on the larger domain $\Omega$. Our goal is to prove the following.

\begin{proposition} \label{prop:ExtensionA}
 Every point $(z_0,w_0) \in \Omega$ has  a neighborhood $U \subseteq \Omega$  with the following properties:
\begin{itemize}
\item[(i)] for each $F^+ \in \mathcal{H}_+(\C^2)$ of the form (\ref{eq:F+}) the series in (\ref{eq:SeriesHOmega+}) converges absolutely and uniformly in $U$ to some $f^+ \in \mathcal{H}(U)$;
\item[(ii)] if $(F^+_n) \subseteq \mathcal{H}_+(\C^2)$ converges locally uniformly in $\C^2$ to $F^+\in \mathcal{H}_+(\C^2)$, then $(f^+_n)$ converges uniformly in $U$ to $f^+$.
\end{itemize}
\end{proposition}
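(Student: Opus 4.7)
The strategy is entirely local, and I would work in the flip chart $\phi_-$. For points $(z_0, w_0) \in \Omega_+$ the conclusion is immediate from Proposition \ref{prop:OmegaPM} together with the locally uniform convergence of the power series for $F^+$, so the substance lies in the remaining set $\Omega \setminus \Omega_+ = \{(z, \infty) : z \in \hat{\C} \setminus \{0\}\}$. Each such point lies in the domain of the flip chart, and its image is $(\zeta_0, \eta_0) = (0, 1/z_0)$; it therefore suffices to produce a polydisk neighborhood $U_{\mathrm{chart}} = \{|\zeta| \le \epsilon\} \times \{|\eta - \eta_0| \le \delta\}$ on which both conclusions hold.

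The pivotal calculation is direct: writing $\zeta = 1/w$, $\eta = 1/z$ one finds, for $p \ge q$,
\begin{equation*}
f_{p,q}(z,w) = \frac{z^p w^q}{(1-zw)^p} = \frac{\zeta^{p-q}}{(\zeta \eta - 1)^p},
\end{equation*}
whose right-hand side is holomorphic throughout $\{\zeta \eta \ne 1\}$. The hypothesis $p \ge q$ enters essentially here: the exponent $p - q$ on $\zeta$ is nonnegative, so each summand of the series in (\ref{eq:SeriesHOmega+}) extends holomorphically across $\{\zeta = 0\} = \{w = \infty\}$.

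To prove (i), I would establish absolute and uniform convergence of the extended series on $U_{\mathrm{chart}}$. For $\epsilon, \delta$ small enough (depending on whether $z_0 \in \C^*$, in which case $\eta_0 \ne 0$ and only $\epsilon$ need be small, or $z_0 = \infty$, in which case $\eta_0 = 0$ and both must be small) one obtains $|\zeta \eta - 1| \ge 1/2$, yielding the termwise bound $|\zeta^{p-q} (\zeta \eta - 1)^{-p}| \le \epsilon^{p-q} 2^p$. Combined with Cauchy's inequality $|a_{p,q}(F^+)| \le \|F^+\|_{R_1, R_2} R_1^{-p} R_2^{-q}$, valid for \emph{arbitrarily large} $R_1, R_2 > 0$ because $F^+$ is entire, the majorant of the series reads
\begin{equation*}
\|F^+\|_{R_1, R_2} \sum_{p \ge q \ge 0} \left(\frac{2\epsilon}{R_1}\right)^{\!p} (R_2 \epsilon)^{-q}.
\end{equation*}
Reindexing $p = q + k$ with $k \ge 0$ factorises this into a product of two geometric series, which converges as soon as $R_1 > 2\epsilon$ and $R_1 R_2 > 2$; both conditions are easily arranged by taking $R_1, R_2$ large.

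Part (ii) is then automatic: the majorant above depends linearly on $\|F^+\|_{R_1, R_2}$, so locally uniform convergence $F^+_n \to F^+$ in $\mathcal{H}(\C^2)$ entails $\|F^+_n - F^+\|_{R_1, R_2} \to 0$ and therefore $\|f^+_n - f^+\|_U \to 0$. The main obstacle I anticipate is not conceptual but organisational: one must coordinate $\epsilon, \delta, R_1, R_2$ so that the three constraints $|\zeta \eta - 1| \ge 1/2$, $R_1 > 2\epsilon$, and $R_1 R_2 > 2$ are simultaneously met, while carefully distinguishing the two subcases $\eta_0 \in \C^*$ and $\eta_0 = 0$. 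Conceptually the entire proof rests on the identity displayed above and the Cauchy estimate for entire functions.
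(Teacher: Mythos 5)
Your argument is correct, and it reaches the same two estimates the statement needs (a geometric majorant in $p$ made possible by $p\ge q$, fed by Cauchy estimates for the entire function $F^+$), but the organization is genuinely different from the paper's. The paper does not localize at all: it fixes an arbitrary compact $K\subseteq\Omega$, splits it into $K_1=\{|w|\le 1\}$ and $K_2=\{|w|\ge 1\}$, and bounds $\|f_{p,q}\|_{K_1}\le \|f_{1,0}\|_{K_1}^p\,\|w^q\|_{K_1}$ and $\|f_{p,q}\|_{K_2}\le \|f_{1,1}\|_{K_2}^p\,\|w^{q-p}\|_{K_2}$ by submultiplicativity (here $p\ge q$ and $|w|\ge 1$ make the second factor $\le 1$), then uses a single Cauchy bidisk of radius $2M$ with $M=\max\{1,\|f_{1,0}\|_{K_1},\|f_{1,1}\|_{K_2}\}$ to get the uniform bound $\sum_{p\ge q}|a_{p,q}(F^+)|\,\|f_{p,q}\|_K\le \tfrac83\|F^+\|_B$; part (ii) is then immediate by linearity of the $a_{p,q}$. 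You instead dispose of $\Omega_+$ via the biholomorphism $\Psi_+$ of Proposition \ref{prop:OmegaPM} and only estimate near the line $\{w=\infty\}$ in the flip chart, using the identity $f_{p,q}=\zeta^{p-q}/(\zeta\eta-1)^p$ with $\zeta=1/w$, $\eta=1/z$, together with Cauchy estimates at arbitrarily large polyradii $R_1,R_2$ (the constraints $|\zeta\eta-1|\ge 1/2$, $R_1>2\epsilon$, $R_1R_2>2$ are indeed simultaneously satisfiable, so the case bookkeeping you worry about is harmless). What each buys: your chart computation makes completely transparent \emph{why} $p\ge q$ is the right condition — the exponent $p-q\ge 0$ of $\zeta$ is exactly what lets each term cross $\{w=\infty\}$ — whereas the paper's single compact-set estimate avoids any case distinction or chart change and delivers a clean quantitative bound on every compact $K\subseteq\Omega$ in one stroke, which is precisely the form of continuity of $\Phi_+$ into the Fr\'echet space $\mathcal{H}_+(\Omega)$ that the later arguments reuse.
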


Hence for each $F^+ \in \mathcal{H}_+(\C^2)$ the holomorphic function $\mathcal{C}_{\Psi_+}(F^+) \in \mathcal{H}(\Omega_+)$ has a holomorphic extension $\Phi_+(F^+) \in \mathcal{H}_+(\Omega)$ given by the series in (\ref{eq:SeriesHOmega+}), and this series  converges locally uniformly in  $\Omega$. Moreover, the mapping $\Phi_+ : \mathcal{H}_+(\C^2) \to \mathcal{H}_+(\Omega)$ is continuous and linear, and is explicitly given by
\begin{align} \label{eq:Phi+}
   \Phi_+ \left( \sum \limits_{p \ge q} a_{p,q} u^p v^q \right) &:=\sum \limits_{p \ge q} a_{p,q} f_{p,q} \, .
\end{align}
Clearly,  $\Phi_+ : \mathcal{H}_+(\C^2) \to \mathcal{H}_+(\Omega)$  is  one--to--one. Hence we have reduced Proposition \ref{prop:Extension} to Proposition \ref{prop:ExtensionA}.

\begin{proof}[Proof of Proposition \ref{prop:ExtensionA}]
    Throughout the proof, we write
    $${\norm{f}}_K \coloneqq \max_{(z,w) \in K} \abs{f(z,w)}$$
    for $K \subseteq \Omega$ compact and continuous functions $f \colon K \to \C$. Fix a compact set $K \subseteq \Omega$ and consider the compact subsets $K_1 \coloneqq \{(z,w) \in K \colon \abs{w} \le 1\}$ as well as $K_2 \coloneqq \{(z,w) \in K \colon \abs{w} \ge 1\}$. Here, we define $\abs{\infty} \coloneqq \infty$. By continuity of the functions
    \begin{equation*}
        f_{1,0}(z,w)
        =
        \frac{z}{1-zw}
        \quad \text{and} \quad
        f_{1,1}(z,w)
        =
        \frac{zw}{1-zw}
    \end{equation*}
    we have
    \begin{equation*}
        M
        \coloneqq
        \max
        \left\{
            1, \,
            {\norm{f_{1,0}}}_{K_1}, \,
            {\norm{f_{1,1}}}_{K_2}
        \right\}
        <
        \infty\,.
    \end{equation*}
    Let $B \coloneqq \{(z,w) \in \C^2 \,\colon \,\abs{z} \le 2M,\, \abs{w} \le 2M\}$. As $F^+ \in \mathcal{H}_+(\C^2)$, the Cauchy estimates show
    \begin{equation*}
        \abs[\big]
        {a_{p,q}(F^+)}
        \le
        \frac{{\norm{F^+}}_B}{(2M)^{p+q}}
        \, , \qquad
        p \ge q.
    \end{equation*}
    Using the submultiplicativity of the norms $\norm{\argument}_{K_1}$ and $\norm{\argument}_{K_2}$, we estimate
    \begin{align*}
        \sum_{p \ge q}
        \abs{a_{p,q}(F^+)}
        \cdot
        {\norm{f_{p,q}}}_{K_1}
        \le
        \sum_{p \ge q}
        \abs{a_{p,q}(F^+)}
        \cdot
        {\norm{f_{1,0}}}^p_{K_1}
        \cdot
        {\norm{w^q}}_{K_1}
        \le
        \sum_{p \ge q}
        \frac{\norm{F^+}_B}{(2M)^{p+q}}
        M^p
        \le
        \frac{8}{3} {\norm{F^+}}_B
    \end{align*}
    as well as
    \begin{align*}
        \sum_{p \ge q}
        \abs{a_{p,q}(F^+)}
        \cdot
        {\norm{f_{p,q}}}_{K_2}
        \le
        \sum_{p \ge q}
        \abs{a_{p,q}(F^+)}
        \cdot
        {\norm{f_{1,1}}}^p_{K_2}
        \cdot
        {\norm{w^{q-p}}}_{K_2}
        \le
        \sum_{p \ge q}
        \frac{{\norm{F^+}}_B}{(2M)^{p+q}}
        M^{p}
        \le
        \frac{8}{3} {\norm{F^+}}_B \,.
    \end{align*}
    As $K = K_1 \cup K_2$, this implies
    \begin{equation}
        \label{eq:ExtensionAProof}
        \sum_{p \ge q}
        \abs{a_{p,q}(F^+)}
        \cdot
        {\norm{f_{p,q}}}_{K}
        \le
        \frac{8}{3} {\norm{F^+}}_B
        <
        \infty \, .
    \end{equation}
    Thus, the series \eqref{eq:SeriesHOmega+} converges uniformly and absolutely on $K$. Varying $K$, we see that the series \eqref{eq:SeriesHOmega+} converges uniformly on compact subsets of $\Omega$ to a limit $f^+ \in \mathcal{H}(\Omega)$. Finally, note that every $(z_0,w_0) \in \Omega$ has a compact neighbourhood $K$ in $\Omega$. This completes the proof of the first statement by setting $U \coloneqq K$. Let now $(F^+_n) \subseteq \mathcal{H}_+(\C^2)$ converge locally uniformly in $\C^2$ to $F^+\in \mathcal{H}_+(\C^2)$. By linearity of the coefficient functionals $a_{p,q}$, the estimate \eqref{eq:ExtensionAProof} implies
    \begin{equation*}
        \norm[\big]
        {f^+ - f_n^+}_U
        \le
        \sum_{p \ge q}
        \abs{a_{p,q}(F^+- F^+_n)}
        \cdot
        {\norm{f_{p,q}}}_{U}
        \le
        4 {\norm{F^+ - F^+_n}}_B \, ,
        \qquad
        n \in \N.
    \end{equation*}
    As $B \subseteq \C^2$ is compact, $(f^+_n)$ therefore converges uniformly to $f^+$ on $U$.
\end{proof}

\subsection*{Proof of Proposition \ref{prop:Extension2}}

The proof of Proposition \ref{prop:Extension2} is based on Proposition \ref{prop:Extension} and the following
simple observation.

\begin{proposition} \label{prop:pi}
  Let $p,q \in \N_0$. Then
  \begin{align*}
    \pi_+ \left( \frac{z^p w^q}{(1-zw)^p} \right)& =\begin{cases} \displaystyle \frac{z^p w^q}{(1-zw)^p} & \text{ if } p \ge q \\
      0 & \text{ if } p<q \end{cases}\\[2mm]
          \pi_- \left( \frac{z^p w^q}{(1-zw)^p} \right)& =\begin{cases} 0 & \text{ if } p \ge q \\
            \displaystyle \frac{z^p w^q}{(1-zw)^p} & \text{ if } p <q \end{cases}
    \end{align*}
  \end{proposition}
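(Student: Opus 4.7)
The plan is to expand $\frac{z^pw^q}{(1-zw)^p}$ as a Taylor series in $(z,w)$ on the bidisk $\D^2$ and simply read off which monomials survive the projections $\pi_\pm$.

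First, I would recall the binomial expansion
\begin{equation*}
    \frac{1}{(1-zw)^p} \;=\; \sum_{k=0}^\infty \binom{p+k-1}{k} (zw)^k \qquad ((z,w) \in \D^2),
\end{equation*}
valid for every $p \in \N_0$ (with the convention $\binom{-1}{0}=1$ handling the degenerate case $p=0$, where the series reduces to the constant $1$). Multiplying both sides by $z^p w^q$ gives the convergent Taylor expansion
\begin{equation*}
    \frac{z^pw^q}{(1-zw)^p} \;=\; \sum_{k=0}^\infty \binom{p+k-1}{k}\, z^{p+k} w^{q+k} \qquad ((z,w) \in \D^2).
\end{equation*}
All nonzero monomials on the right have the form $c_k\, z^{p+k} w^{q+k}$ with $k \in \N_0$, and the crucial observation is that the exponent difference $(p+k)-(q+k) = p-q$ is independent of the summation index.

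Now I would simply apply the definitions \eqref{eq:proj} of $\pi_+$ and $\pi_-$. If $p \geq q$, then $p+k \geq q+k$ for every $k \in \N_0$, so \emph{every} monomial of the expansion falls into the range of $\pi_+$ and none into that of $\pi_-$; this gives $\pi_+\bigl(\tfrac{z^pw^q}{(1-zw)^p}\bigr) = \tfrac{z^pw^q}{(1-zw)^p}$ and $\pi_-\bigl(\tfrac{z^pw^q}{(1-zw)^p}\bigr) = 0$. Dually, if $p < q$, then $p+k < q+k$ for every $k$, so every monomial falls into the range of $\pi_-$, yielding the other two equalities. This exhausts both cases and completes the proof.

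There is essentially no obstacle here: the statement is a direct consequence of the bookkeeping of exponents in the binomial expansion, and the only potentially delicate point is making sure the degenerate case $p=0$ is covered, which is handled uniformly by the generalized binomial coefficient convention above.
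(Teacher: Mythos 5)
Your proof is correct and follows essentially the same route as the paper: the paper simply factors $\tfrac{z^pw^q}{(1-zw)^p}$ as $\tfrac{(zw)^q}{(1-zw)^p}\,z^{p-q}$ (resp.\ $\tfrac{(zw)^p}{(1-zw)^p}\,w^{q-p}$), i.e.\ a power series in the product $zw$ times a single monomial, which is exactly the observation that all Taylor monomials have constant exponent difference $p-q$. Your explicit binomial expansion just spells out the same bookkeeping.
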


  \begin{proof}
This follows from
    \[\frac{z^p w^q}{(1-zw)^p}=\begin{cases} \displaystyle \frac{ (zw)^q}{(1-zw)^p} z^{p-q} \in \mathcal{H}_+(\D^2)& \text{ if } p \ge q \\[4mm]
 \displaystyle  \frac{ (zw)^p}{(1-zw)^p} w^{q-p} \in \mathcal{H}_-(\D^2) & \text{ if } p<q \, .\qedhere\end{cases}
    \]
    \end{proof}

    \begin{proof}[Proof of Proposition \ref{prop:Extension2}]
      It suffices to prove Part (a). Let $f \in \mathcal{H}(\Omega_+)$. Then, by Proposition~\ref{prop:OmegaPM},  $F:=f \circ \Psi_+^{-1} \in \mathcal{H}(\C^2)$, so
      $$ F(u,v)=\sum \limits_{p,q=0}^{\infty} a_{p,q} u^p v^q \, , \quad (u,v) \in \C^2 \, , $$
      or, equivalently,
      $$ f(z,w)=\sum \limits_{p,q=0}^{\infty} a_{p,q}  \frac{z^pw^q}{(1-zw)^p} \, , \qquad (z,w) \in \Omega_+ \, .$$
      This series converges in particular absolutely and locally uniformly in $\D^2$, so
      $$ \pi_+(f)(z,w)=\sum \limits_{p,q=0}^{\infty} a_{p,q} \pi_+ \left(  \frac{z^pw^q}{(1-zw)^p} \right)=
      \sum \limits_{p \ge q} a_{p,q} f_{p,q}(z,w) \, , \qquad (z,w) \in \D^2 \, $$
      by Proposition \ref{prop:pi}.
      Recall that by slight abuse of notation, we denote both projections $\mathcal{H}(\C^2) \to \mathcal{H}_+(\C^2)$ and $\mathcal{H}(\Omega_+) \to \mathcal{H}_+(\Omega)$ by the same symbol $\pi_+$. In view of (\ref{eq:Phi+}), this implies
\begin{equation} \label{eq:comm}
  \pi_+(f)=\Phi_+(\pi_+(F))=\left( \mathcal{C}_{\Psi_+} \circ \pi_+ \circ \mathcal{C}_{\Psi_+}^{-1} \right) f \, .
  \end{equation}
      Note that the right--hand side is a composition of the three continuous operators
      $$ \mathcal{C}^{-1}_{\Psi_+} : \mathcal{H}(\Omega_+) \to \mathcal{H}(\C^2)\, , \quad
\pi_+ : \mathcal{H}(\C^2) \to \mathcal{H}_+(\C^2)\, , \quad  \mathcal{C}_{\Psi_+} : \mathcal{H}_+(\C^2) \to \mathcal{H}_+(\Omega) \, ,
$$
see Proposition \ref{prop:OmegaPM} and Proposition \ref{prop:Extension}. Thus $\pi_+ : \mathcal{H}(\Omega_+) \to \mathcal{H}_+(\Omega)$ is continuous and (\ref{eq:comm}) also shows that the diagram in Figure \ref{fig:1} (a) commutes. Using (\ref{eq:comm}) twice and the fact that $\pi_+ : \mathcal{H}(\C^2) \to \mathcal{H}_+(\C^2$) is a projection implies
$$\pi_+(\pi_+(f))=\pi_+\circ  (\mathcal{C}_{\Psi_+} \circ \pi_+ \circ \mathcal{C}_{\Psi_+}^{-1}) f= (\mathcal{C}_{\Psi_+} \circ \pi_+ \circ \pi_+ \circ \mathcal{C}_{\Psi_+}^{-1})f=
(\mathcal{C}_{\Psi_+} \circ \pi_+ \circ \mathcal{C}_{\Psi_+}^{-1})f=\pi_+(f)$$
for all $f \in \mathcal{H}(\Omega_+)$, so $\pi_+ : \mathcal{H}(\Omega_+) \to \mathcal{H}_+(\Omega) $ is a projection as well. It remains to prove that $\pi_+(f)=f$ for all $f \in \mathcal{H}_+(\Omega)$. However, recalling that $\mathcal{H}_+(\Omega)$ is the closed linear span of the functions $f_{p,q}$ with $p \ge q$ in $\mathcal{H}(\Omega)$, this follows at once from Proposition \ref{prop:pi} since $\pi_+(f_{p,q})=f_{p,q}$ whenever $p \ge q$.
              \end{proof}

\subsection*{Proof of Theorem \ref{thm:HOmegaDecomposition} and Theorem \ref{thm:HOmegaDecomposition2}}

Proposition \ref{prop:OmegaPM} and Proposition \ref{prop:Extension} imply that the direct sum $\Phi:=\Phi_+ \oplus \Phi_- : \mathcal{H}_+(\C^2) \oplus \mathcal{H}_-(\C^2) \to \mathcal{H}(\Omega)$ is linear, continuous and injective. To prove its surjectivity, let  $f \in \mathcal{H}(\Omega)$. Then
\[F^+:=\pi_+(f \circ \Psi_+^{-1}) \in \mathcal{H}_+(\C^2)\quad\text{and}\quad F^-:=\pi_-(f \circ \Psi_-^{-1}) \in \mathcal{H}_-(\C^2)\, .\] Therefore, $f^+:=\Phi_+(F^+) \in \mathcal{H}_+(\Omega)$ and $f^-:=\Phi_-(F^-) \in \mathcal{H}_-(\Omega)$, and we deduce
\begin{align*}
\Phi(F^++F^-) &=\Phi_+(F^+)+\Phi_-(F^-) =\Phi_+\left(\pi_+(f \circ \Psi_+^{-1})\right)+\Phi_-\left(\pi_-(f \circ \Psi_-^{-1}) \right)\\ & =
\pi_+ \left( \mathcal{C}_{\Psi_+}(f \circ \Psi_{+}^{-1}) \right)+\pi_- \left( \mathcal{C}_{\Psi_-}(f \circ \Psi_{-}^{-1}) \right)
\end{align*}
in view of the commutation relations of Figure \ref{fig:1}. Thus
\[\Phi(F^++F^-)=\pi_+(f)+\pi_-(f)=f\,,\]
and $\Phi$ is surjective. Note that
\[F^++F^-=\pi_+(f \circ \Psi_+^{-1})+\pi_-(f \circ \Psi_-^{-1})=\Phi^{-1}(f)\,.\]
The proof of Theorem \ref{thm:HOmegaDecomposition2} (and Theorem \ref{thm:HOmegaDecomposition}) is complete.\hfill{$\square$}

\subsection*{Proof of Corollary \ref{cor:Schauder}}

Let $f \in \mathcal{H}(\Omega)$. By Theorem \ref{thm:HOmegaDecomposition2}, $F:=\Phi^{-1}(f) \in \mathcal{H}(\C^2)$, so
$$ F(u,v)=\sum \limits_{p,q=0}^{\infty} a_{p,q} u^p v^q \, $$ with
\begin{equation} \label{eq:TaylorCoeff}
  a_{p,q}=\frac{1}{(2\pi i)^2} \int \limits_{\partial\D} \int \limits_{\partial\D} \frac{F(z,w)}{z^{p+1} w^{q+1}} \, dz dw \, .
\end{equation}
  We can write
$$ F(u,v)=\pi_+(F)(u,v)+\pi_-(F)(u,v)=\sum \limits_{p \ge q}  a_{p,q} u^p v^q+\sum \limits_{p < q}  a_{p,q} u^p v^q \, .
$$
Hence, again by Theorem \ref{thm:HOmegaDecomposition2},
\begin{align*}
  f(z,w)&=\Phi(F)(z,w)=\Phi_+(\pi_+(F))(z,w)+\Phi_-(\pi_-(F))(z,w)\\ &  =\sum \limits_{p \ge q}  a_{p,q} f_{p,q}(z,w)+\sum \limits_{p < q}  a_{p,q} f_{p,q}(z,w) \, .
          \end{align*}
In the last step, we have used identity (\ref{eq:Phi+}) for $\Phi_+$  and the corresponding identity for $\Phi_-$.
Since both series converge absolutely and locally uniformly in $\Omega$, we can rearrange the order of summation.
The explicit formula for the Schauder coefficients $a_{p,q}$ in terms of $f$ follows immediately from (\ref{eq:TaylorCoeff}). Indeed, if $p \ge q$, then $a_{p,q}$ is the corresponding Taylor coefficient of $\pi_+(F)$ and
$$ \pi_+(F)=\pi_+\left( f \circ \Psi_+^{-1}\right)=\pi_+(f) \circ \Psi_+^{-1} \, ,
$$
in view of Figure \ref{fig:1}(a). On the other hand, $\pi_+(f) \circ \Psi_+^{-1} \in \mathcal{H}_+(\C^2)$, so the Cauchy integral formula shows that
$$ a_{p,q}=\frac{1}{(2\pi i)^2} \int \limits_{\partial\D} \int \limits_{\partial\D} \frac{\pi_+(F)(z,w)}{z^{p+1} w^{q+1}} \, dz dw =\frac{1}{(2\pi i)^2} \int \limits_{\partial\D} \int \limits_{\partial\D} \frac{(f \circ \Psi_+^{-1}) (z,w)}{z^{p+1} w^{q+1}} \, dz dw $$
Inserting the explicit expression for $\Psi_+^{-1}$ proves (\ref{eq:SchauderCoeff}) for $p \ge q$. The proof for $p<q$ is identical.~\hfill{$\square$}

\section{Proofs  of Propositions \ref{prop:Invariance2} and \ref{prop:Invariance3},  Theorems \ref{thm:Invariance} and  \ref{thm:Invariance2}, and  Theorem \ref{thm:AutOmega}} \label{sec:proof2}

\subsection*{Proof of Proposition \ref{prop:Invariance2}}
Let  $T=(T_1,T_2) \colon O \longrightarrow G$ be a holomorphic mapping defined on some non--empty open subset  $O$ of $G$ such that
\begin{equation} \label{eq:aut1a}
  (z-w)^2 \partial_z\partial_w \left(F \circ T\right)(z,w)=\left(T_1(z,w)-T_2(z,w)\right)^2 (\partial_z\partial_w F)(T(z,w))
  \end{equation}
  for all $F \in \mathcal{H}(G)$ and all $(z,w) \in O$.
We shall make use of condition (\ref{eq:aut1a}) by inserting several carefully chosen specific functions ${F \in \mathcal{H}(G)}$.
\medskip

First, we insert
$$ F \in \mathcal{H}(G) \, , \quad F(z,w)=\frac{1}{z-w}$$
into (\ref{eq:aut1a}). By an elementary, but lengthy computation we find that condition (\ref{eq:aut1a}) for this choice of $F$ is equivalent to
\begin{equation} \label{eq:aut2}
  \begin{split}
2 \left( \frac{T_1(z,w)-T_2(z,w)}{z-w} \right)^2 & =\left( T_1-T_2\right) \left( \partial_z\partial_w T_1-\partial_z\partial_wT_2 \right)\\ & \qquad \qquad -2 \left( \partial_w T_1-\partial_w T_2 \right) \left( \partial_z T_1-\partial_z T_2 \right)
 \end{split} \end{equation}
  If we take
  $$ F \in \mathcal{H}(G) \, , \quad F(z,w)=\frac{1}{(z-w)^2}$$
in (\ref{eq:aut1a}), then another computation implies that  condition (\ref{eq:aut1a}) for this choice of $F$ is equivalent to
\begin{equation} \label{eq:aut3}
  \begin{split}
3 \left( \frac{T_1(z,w)-T_2(z,w)}{z-w} \right)^2 & =\left( T_1-T_2\right) \left( \partial_z\partial_w T_1-\partial_z\partial_wT_2 \right)\\ & \qquad \qquad -3 \left( \partial_w T_1-\partial_w T_2 \right) \left( \partial_z T_1-\partial_z T_2 \right)
\end{split}
\end{equation}
Together, the two conditions (\ref{eq:aut2}) and (\ref{eq:aut3}) clearly  imply
$$ \left( T_1-T_2\right) \left( \partial_z\partial_w T_1-\partial_z\partial_wT_2 \right)=0 \, .$$
Since $T=(T_1,T_2)$ maps the open set $O \subseteq G$ into $G$, we get  $T_1(z,w)\not=T_2(z,w)$ for all $(z,w) \in O$ by definition of $G$, and hence
$$ \partial_z\partial_w (T_1 - T_2) = 0 \quad \text{on } O \, .$$
Consequently,  all terms with mixed derivatives  in the Taylor expansion of $T_1 - T_2$ about each $(z_0, w_0) \in O$ vanish. This way, we obtain auxiliary functions $H_1 \in \mathcal{H}(D_1)$, $H_2 \in \mathcal{H}(D_2)$ for certain  domains  $D_1, D_2 \subseteq \C$ with $D_1 \times D_2 \subseteq O$  such that
\begin{equation*}
    T_1(z,w) - T_2(z,w)
    =
    H_1(z) - H_2(w),
    \qquad
    (z,w) \in D_1 \times D_2 \, .
  \end{equation*}
  Note that we can choose $D_1,D_2\subseteq\C$ since $O$ and $G \cap \C^2$ always have a nonempty intersection. Inserting this expression into (\ref{eq:aut2}), it is easily checked that condition (\ref{eq:schwarzeq1}) holds for $H_1$ and $H_2$, and the proof of Proposition \ref{prop:Invariance2} is complete.
  \hfill{$\square$}

\subsection*{Proof of Proposition \ref{prop:Invariance3}}

 If $H_2$ is constant on $D_2$, then (\ref{eq:schwarzeq1}) implies $H_1$ is constant on $D_2$ and $H_1=H_2$.
    If $H_2$ is  not constant on $D_2$, then  (\ref{eq:schwarzeq1}) implies $H_1$ is not constant on $D_1$ either and in fact  $H_1$ has vanishing Schwarzian derivative away from possible zeros of $H_1'$, that is,
     \begin{equation*} \label{eq:schwarz1}
         \Schwarzian_{H_1}(z)
         :=
         \bigg(
            \frac{H_1''(z)}{H_1'(z)}
         \bigg)'
         -
         \frac{1}{2}
         \bigg(
            \frac{H_1''(z)}{H_1'(z)}
         \bigg)^2
         =
         0\, .
       \end{equation*}
       This follows by fixing $w \in D_2$ and differentiating (\ref{eq:schwarzeq1}) twice w.r.t.~$z \in D_1$.
    It is a classical fact (see \cite{laine1992}) that $\Schwarzian_{H_1}=0$ is equivalent to  $H_1 \in \Aut(\hat{\C})$.
Switching  the roles of $H_1$ and $H_2$, we see that either $H_1$, $H_2$ are both constant or $H_1,H_2 \in \Aut(\hat{\C})$. Finally, if $H_1 \in \Aut(\hat{\C})$ and $H_2 \in \Aut(\hat{\C})$, then
the right--hand side of \eqref{eq:schwarzeq1} is never zero. Hence the left--hand side is never zero either, and thus $H_1(z) \neq H_2(w)$ for all $z,w \in \C$ such that $z\not=w$. This is clearly only possible if $H_1=H_2$. \hfill{$\square$}

\begin{remark}  \label{rem:Moeb} \phantom{aa}\\[-10mm]
    \begin{enumerate}
    \item[(a)] As noted earlier in Remark \ref{rem:HawleySchiffer}, Proposition \ref{prop:Invariance3} is an extension of a result of Hawley and Schiffer \cite{HawleySchiffer1966}, who considered the special case $D_1=D_2$.
 We therefore briefly discuss this case a bit further in order to put Proposition \ref{prop:Invariance3} into a somewhat broader context. Note that if $D_1=D_2$ or merely $D_1\cap D_2 \not=\emptyset$,  it follows immediately from (\ref{eq:schwarzeq1}) that $H_1 \equiv H_2$. Let $H:=H_1$. Excluding the trivial case that $H$ is constant, condition (\ref{eq:schwarzeq1}) implies $H$ is univalent in $D_1$. Therefore, the function
   \begin{equation*}
     \hat{\Schwarzian}_H  : G_1 \to \C \, , \quad
     \hat{\Schwarzian}_{H}(z,w)
        :=
        \begin{cases}
          \displaystyle  \frac{H'(z)H'(w)}{\left(H(z)-H(w)\right)^2}
           -
           \frac{1}{(z-w)^2}
           \; &\text{for} \;
           z \neq w \\[4mm]
           \Schwarzian_H(z)
           \; &\text{for} \;
           z = w \\
        \end{cases}
      \end{equation*}
      is well--defined and it is not difficult to show that $\hat{\Schwarzian}_H$ is holomorphic on $D_1 \times D_2$.
      A discussion of this fact and its implications can be found in  \cite{HawleySchiffer1966}.
      The difference between  the point of view of Hawley--Schiffer \cite{HawleySchiffer1966} and the one we pursue in this paper is now apparent: While \cite{HawleySchiffer1966} extends the Schwarzian derivative $\Schwarzian_H$, a function  of one variable,  to~a holomorphic function  $\hat{\Schwarzian}_H$ of two variables $(z,w)$   in a neighborhood of the ``diagonal'' $(z,z)$, our approach to Theorem \ref{thm:InvarianceConf} is based on Proposition \ref{prop:Invariance3} for the opposite case $D_1 \cap D_2=\emptyset$, so we exclude the diagonal.

    \item[(b)] Fixing $w \in D_2$, equation \eqref{eq:schwarzeq1} is a Riccati differential equation for the unknown function $H_1 : D_1 \to \C$. It is an elementary fact that each Riccati equation can be transformed in~a canonical way into normal form $u'=A(z)+u^2$, see  (see \cite[p.~165]{laine1992}). For the specific Riccati equation (\ref{eq:schwarzeq1}) the canonical transformation is given by
      $$ H_1(z)=H_2'(w) (z-w)^2 u(z)+H_2(w)+\frac{H_2'(w)}{2} (z-w) $$
      and turns the Riccati equation (\ref{eq:schwarzeq1}) into
      $$ u'=u^2 \, ,$$
      whose set of solutions is given by $\{(z+c)^{-1} \, : \, c \in \C\} \cup \{0\}$. Elementary considerations then again lead to the conclusion that either $H_1$ is constant or belongs to $\Aut(\hat{\C})$. This provides a slightly different approach to Proposition \ref{prop:Invariance3}.
      \end{enumerate}
\end{remark}

\subsection*{{Proof of Theorem \ref{thm:InvarianceConf}} ``(a) $\Longrightarrow$ (b)''}
Let  $T=(T_1,T_2) \colon O \longrightarrow G$ be  a holomorphic mapping defined on some subdomain  $O$ of $G$ such that condition (\ref{eq:aut1}) holds for all ${F \in \mathcal{H}(G)}$ and all $(z,w) \in O$.

\medskip

(i)
By Proposition \ref{prop:Invariance2} and Proposition \ref{prop:Invariance3} there exists a function $H:=H_1 : \hat{\C} \to \hat{\C}$, which is either constant or belongs to $\Aut(\hat{\C})$,  such that
\begin{equation} \label{eq:H1=H2}
  \left( \frac{H(z)-H(w)}{z-w} \right)^2=H'(z) H'(w)
  \end{equation}
and
\begin{equation} \label{eq:T1-T2}
  T_1(z,w)-T_2(z,w)=H(w)-H(z) \,
  \end{equation}
  for all $(z,w) \in O$. Since $T$ maps into $G$, we have $T_1(z,w)\not=T_2(z,w)$ for all $(z,w) \in O$, so the case $H$ is constant cannot occur, and therefore  $H \in \Aut(\hat{\C})$.

  \medskip

  (ii) We claim, without loss of generality, that  we may assume
  \begin{equation} \label{eq:assumption}
    T_1(z,w)-T_2(z,w)=w-z \quad \text{ for all } (z,w) \in O \, .
    \end{equation}

    To prove this claim, consider
    $$S(z,w):=\left( H^{-1}(z),H^{-1}(w) \right) \, , \qquad (z,w) \in G \, .$$
    Note that $S$ is a biholomorphic self--map of $G$  and  $\tilde{O}:=S^{-1}(O)$ is a subdomain of $G$.
    The holomorphic mapping
    $$ \tilde{T}:=T \circ S : \tilde{O} \to G \,  $$
    then satisfies
    $$ \tilde{T}_1(z,w)-\tilde{T}_2(z,w)= T_1\left(S(z,w)\right)-T_2\left(S(z,w)\right)=H\left(H^{-1}(w) \right)-H\left( H^{-1}(z) \right)  \, $$
    in view of (\ref{eq:T1-T2}), and hence
    \begin{equation} \label{eq:T1-T2a}
      \tilde{T}_1(z,w)-\tilde{T}_2(z,w)=w-z \quad \text{ for all } (z,w) \in \tilde{O} \, .
      \end{equation}
    To finish the proof of Theorem \ref{thm:InvarianceConf}  it therefore suffices to show that the Laplacian on $G$ is $\tilde{T}$--invariant, that is,  condition (\ref{eq:aut1}) holds with $T$ replaced by $\tilde{T}$. To check this, observe that
    \begin{equation} \label{eq:InvarSchwarzEq1}
      \begin{split}
      (z-w)^2 \partial_z\partial_w \left(F \circ \tilde{T}\right)(z,w)& =(z-w)^2 \partial_z\partial_w \left( \left(F \circ T \right) \circ S\right)(z,w)\\
                                                                      &=(z-w)^2 \left( \partial_z\partial_w  \left(F \circ T \right) (S(z,w))  \right) \left( H^{-1} \right)'(z)  \left( H^{-1} \right)'(w)
                                                                    \end{split}
                                                                        \end{equation}
                                                                        by the chain rule and the particular form of $S(z,w)=\left( H^{-1}(z),H^{-1}(w) \right)$. Since $T$ satisfies condition (\ref{eq:aut1}), we have
      \begin{equation} \label{eq:InvarSchwarzEq2}
\begin{split}
\partial_z\partial_w  \left(F \circ T \right) (S(z,w))
  & =\left( \frac{H^{-1}(z)-H^{-1}(w)}{ H^{-1}(z)-H^{-1}(w)}\right)^2 \partial_z\partial_w  \left(F \circ T \right) (S(z,w)) \\
  &=   \left( \frac{ \tilde{T}_1(z,w)-\tilde{T_2}(z,w)}{ H^{-1}(z)-H^{-1}(w)}\right)^2  \left( \partial_z\partial_w F \right) \left( \tilde{T}(z,w) \right) \\
  &=   \left( \frac{z-w}{ H^{-1}(z)-H^{-1}(w)}\right)^2  \left( \partial_z\partial_w F \right) \left( \tilde{T}(z,w) \right)
    \,
  \end{split}
  \end{equation}
where we have taken (\ref{eq:T1-T2a}) into account in the last identity. Being a M\"obius transformation, $H^{-1}$ has the property
$$   \left( \frac{z-w}{ H^{-1}(z)-H^{-1}(w)}\right)^2 =\frac{1}{\left( H^{-1} \right)'(z)  \left( H^{-1} \right)'(w)} \, . $$
This is exactly condition (\ref{eq:H1=H2}), but for the inverse $H^{-1}$ of $H$ instead of $H$.
  Therefore, combining (\ref{eq:InvarSchwarzEq1}) and (\ref{eq:InvarSchwarzEq2}), leads to
\begin{align*}
  (z-w)^2 \partial_z\partial_w \left(F \circ \tilde{T}\right)(z,w) &= (z-w)^2 \left( \partial_z\partial_w F \right) \left( \tilde{T}(z,w) \right)\\ & =\left( \tilde{T}_1(z,w)-\tilde{T}_2(z,w) \right)^2  \left( \partial_z\partial_w F \right) \left( \tilde{T}(z,w) \right)
  \end{align*}
  again making use of (\ref{eq:T1-T2a}) in the last step. Consequently, (\ref{eq:aut1}) holds with $T$ replaced by $\tilde{T}$.

  \medskip

  (iii) In view of (ii) it remains to prove Theorem \ref{thm:InvarianceConf} under the additional assumption that $T$ satisfies (\ref{eq:assumption}).

\hide{

this expression can be simplified to

where the last equality sign holds because $F \circ T$ satisfies condition (\ref{eq:aut1}).
The expression on the right--hand side can be written in the form

    Now writing
\hide{      \begin{equation} \label{eq:aut1b}
  (z-w)^2 \partial_z\partial_w \left(F \circ \tilde{T}\right)(z,w)=\left(\tilde{T}_1(z,w)-\tilde{T}_2(z,w)\right)^2 (\partial_z\partial_w F)(\tilde{T}(z,w))
\end{equation}
for all $(z,w) \in \tilde{O}$ and all $F \in \mathcal{H}(G)$. In order to prove (\ref{eq:aut1b}), we write}
$F \circ \tilde{T}$ as $(F \circ T) \circ S$ and using the fact that $F \circ T$ satisfies (\ref{eq:aut1}), it is straightforward to check that this is the case if and only if

(iii)
\subsection*{Proof of Theorem \ref{thm:Invariance} and Proposition \ref{prop:Invariance2}}

We continue to work in the $\conf$--model, and for ease of notation we denote $G:=\conf$.
To prove Theorem \ref{thm:Invariance} we need to show that if $T=(T_1,T_2) \colon \tilde{O} \longrightarrow G$ is a holomorphic mapping defined on some subdomain $\tilde{O}$ of $G$ such that
\begin{equation} \label{eq:aut1}
  (z-w)^2 \partial_z\partial_w \left(F \circ T\right)(z,w)=\left(T_1(z,w)-T_2(z,w)\right)^2 (\partial_z\partial_w F)(T(z,w))
  \end{equation}
  for all $F \in \mathcal{H}(G)$, then $T(z,w)=(\psi(z),\psi(w))$
or $T(z,w)= (\psi(w),\psi(z))$
for some ${\psi \in \Aut(\hat{\C})}$. We shall make use of condition (\ref{eq:aut1}) by inserting several carefully chosen specific functions ${F \in \mathcal{H}(G)}$.
\medskip

First, we insert
$$ F \in \mathcal{H}(G) \, , \quad F(z,w)=\frac{1}{z-w}$$
into (\ref{eq:aut1}). By an elementary, but lengthy computation we find that condition (\ref{eq:aut1}) for this choice of $F$ is equivalent to
\begin{equation} \label{eq:aut2}
  \begin{split}
2 \left( \frac{T_1(z,w)-T_2(z,w)}{z-w} \right)^2 & =\left( T_1-T_2\right) \left( \partial_z\partial_w T_1-\partial_z\partial_wT_2 \right)\\ & \qquad \qquad -2 \left( \partial_w T_1-\partial_w T_2 \right) \left( \partial_z T_1-\partial_z T_2 \right)
 \end{split} \end{equation}
  If we take
  $$ F \in \mathcal{H}(G) \, , \quad F(z,w)=\frac{1}{(z-w)^2}$$
in (\ref{eq:aut1}), then another computation implies that  condition (\ref{eq:aut1}) for this choice of $F$ is equivalent to
\begin{equation} \label{eq:aut3}
  \begin{split}
3 \left( \frac{T_1(z,w)-T_2(z,w)}{z-w} \right)^2 & =\left( T_1-T_2\right) \left( \partial_z\partial_w T_1-\partial_z\partial_wT_2 \right)\\ & \qquad \qquad -3 \left( \partial_w T_1-\partial_w T_2 \right) \left( \partial_z T_1-\partial_z T_2 \right)
\end{split}
\end{equation}
The two conditions (\ref{eq:aut2}) and (\ref{eq:aut3}) clearly  imply
$$ \left( T_1-T_2\right) \left( \partial_z\partial_w T_1-\partial_z\partial_wT_2 \right)=0 \, .$$
Since $T=(T_1,T_2)$ maps the open set $\tilde{O} \subseteq G$ into $G$, we get  $T_1(z,w)\not=T_2(z,w)$ for all $(z,w) \in \tilde{O}$, and hence
$$ \partial_z\partial_w (T_1 - T_2) = 0 \quad \text{on } \tilde{O} \, .$$
Consequently, in the Taylor expansion of $T_1 - T_2$ about each $(z_0, w_0) \in \tilde{O}$  all terms with mixed derivatives vanish. This way, we obtain auxiliary functions $H_1 \in \mathcal{H}(D_1)$ and $H_2 \in \mathcal{H}(D_2)$ for certain  domains  $D_1, D_2 \subseteq \C$ with $D_1 \times D_2 \subseteq \tilde{O}$, and such that
\begin{equation*}
    T_1(z,w) - T_2(z,w)
    =
    H_1(z) - H_2(w),
    \qquad
    (z,w) \in D_1 \times D_2 \, .
\end{equation*}
In view of (\ref{eq:aut2}), we arrive at
\begin{equation} \label{eq:schwarzeq1new}
  \left( \frac{H_1(z)-H_2(w)}{z-w} \right)^2=H_1'(z) H_2'(w) \, , \qquad (z,w) \in D_1 \times D_2 \, .
  \end{equation}
It is not difficult to show that this equation is equivalent to (\ref{eq:schwarzeq1}) when translated back to  the original $\Omega$ coordinates. This concludes the first part of the proof of Proposition \ref{prop:Invariance2}.
Moreover, since (\ref{eq:schwarzeq1new} holds,
we are in a position to apply Proposition \ref{prop:Moeb} and conclude that
$H_1=H_2$, where either $H_1=H_2$ is constant or $H_1=H_2 \in \Aut(\hat{\C})$. Since $T_1(z,w)\not=T_2(z,w)$ for all $(z,w) \in \tilde{O}$ the first alternative cannot occur, so
we get
$$ T_1(z,w)-T_2(z,w)=H_1(w)-H_1(z)$$
for some $H_1 \in \Aut(\hat{\C})$. Replacing $T$ by $T(H_1^{-1}(z),H_1^{-1}(w))$ we may therefore assume
$$ T_1(z,w)-T_2(z,w)=w-z \, .$$}

We therefore return  to (\ref{eq:aut1}), where we choose
$$ F \in \mathcal{H}(G) \, , \quad F(z,w)=\frac{z}{(z-w)^2} \, .$$
It is easily seen that equation  (\ref{eq:aut1}) then takes  the form
\begin{equation} \label{eq:aut4}
\frac{2
   \left(\partial_w T_1(z,w)-\partial_z T_1(z,w)-1\right)}{w-z}+\partial_z\partial_w T_1(z,
 w)=0 \, ,
 \end{equation}
With
$$ F \in \mathcal{H}(G) \, , \quad F(z,w)=\frac{z^2}{(z-w)^2} \, ,$$
equation  (\ref{eq:aut1}) has the form
$$ 2 \partial_wT_1(z,w) \partial_z T_1(z,w)+2 T_1(z,w) \left(\frac{2
   \left(\partial_w T_1(z,w)-\partial_z T_1(z,w)-1\right)}{w-z}+\partial_z\partial_w T_1(z,
 w)\right)=0 \, .$$
Taken together with (\ref{eq:aut4}), this implies
$$ (\partial_wT_1(z,w)) (\partial_z T_1(z,w))=0 \, ,$$
and either $T_1$ is independent of $z$ or independent of $w$. Going back to (\ref{eq:aut4}), we see that either
$T_1(z,w)=w+\gamma$ or $T_1(z,w)=z+\gamma$ for some $\gamma \in \C$. In a similar way, we can deduce that either
$T_2(z,w)=w+\gamma'$ or $T_2(z,w)=z+\gamma'$ for some $\gamma' \in \C$.
Finally, since $T_1(z,w)-T_2(z,w)=w-z$, we see that
$$ \text{ either} \quad T(z,w)=(z+\gamma,w+\gamma) \quad \text{ or } \quad T(z,w)=(w+\gamma,z+\gamma) $$
for some $\gamma \in \C$. The proof is complete. \hfill{$\square$}

\hide{While our formulation of Proposition~\ref{prop:Moeb} is the natural one, as it treats $H_1$ and $H_2$ symmetrically, it is not well-adapted to the geometry of $\Omega$. Our next lemma remedies this.
\begin{lemma}
    \label{lem:OmegaIsG}%
    Let $\Psi \in \Aut(\hat{\C})$ and $G \coloneqq \hat{\C}^2 \setminus \{ (z,z) \, : \, z \in \hat{\C}\}$. The mapping $T \colon \Omega \longrightarrow G$,
    \begin{equation}
       T(z,w)
       =
       \big(\Psi(z), \Psi(1/w)\big)
    \end{equation}
    is biholomorphic. Moreover, we have for $f \in \mathcal{H}(G)$
    \begin{equation}
        \label{eq:GLaplacian}
        \Delta_G f
        \at[\Big]{(z,w)}
        \coloneqq
        \Delta_{zw} (f \circ T)
        \at[\Big]{T^{-1}(z,w)}
        =
        -4(z-w)^2
        \frac{\partial^2 f}{\partial z \partial w}, \qquad
        (z,w) \in G
        \, .
    \end{equation}
\end{lemma}
\begin{proof}
    In view of \eqref{eq:OmegaAutomorphism}, the first statement is clear. For the same reason, it suffices to consider $\Psi = \id$ for the second statement, as $S(z,w) = T(z,1/w)$ is always in $\mathcal{M}$ and thus preserves $\Delta_{zw}$. Let $f \in \mathcal{H}(G)$. We compute
    \begin{align*}
        \Delta_{zw} (f \circ T)
        \at[\Big]{T^{-1}(z,w)}
        &=
        4(1-zw)^2
        \partial_w
        \partial_z
        \big(
            f(z, 1/w)
        \big)
        \at[\Big]{(z,1/w)} \\
        &=
        4(1-zw)^2
        f_{z,w}(z,1/w)
        (-1/w^2)
        \at[\Big]{(z,1/w)} \\
        &=
        -4(1-z/w)^2
        w^2
        f_{z,w}(z,w) \\
        &=
        -4(w-z)^2
        f_{z,w}(z,w).
        \qedhere
    \end{align*}
\end{proof}}

\begin{remark} \label{rem:TestFunctions} The preceding proof would become much simpler under the much stronger assumption that (\ref{eq:aut1}) holds for all $F$ holomorphic merely in a neighborhood of the origin in $\C^2$, since then one could choose the monomials $z^n w^m$, $n,m=0,1,\ldots$ as ``test functions'' instead of functions holomorphic on all of $\Omega$ resp.~on $G=\conf$.
\end{remark}

\hide{In particular, pullback with the inverse of $T(z,w) = (z,1/w)$ maps eigenfunctions of $\Delta_{zw}$ to eigenfunctions of $\Delta_G$. By virtue of Remark~\ref{rem:Moeb}[c], the identity \eqref{eq:Moeb} thus takes the form
\begin{equation}
    H_1'(z) H_2'(w)
    =
    \frac{\big(1 - H_1(z)H_2(w)\big)^{2}}{(1 - zw)^2},
    \qquad
    (z,w) \in D_1 \times D_2 \subseteq \Omega
\end{equation}
and any two such functions are either constant or $H_2 = 1/z \circ H_1 \circ 1/z \in \Aut(\hat{\C})$.}

\subsection*{Proof of Theorem \ref{thm:Invariance2}}
Let $\xi=(z_0,w_0) \in \partial \Omega$ and choose $r>0$ such that $K_r(z_0) \times K_r(w_0) \subseteq U$. For fixed $w_1 \in K_r(w_0)$ consider the holomorphic
function $$F_{w_1}:\hat{\C}\setminus \{1/w_1\} \to\C \, , \quad  z\mapsto F(z,w_1)\, .$$
By assumption,  $F_{w_1}$ is bounded on $\hat{\C}\setminus \{1/w_1\}$, and thus by Liouville's theorem there is a constant $c(w_1)\in  \C$ such that  $F_{w_1}(z)=c(w_1)$ for all $z\in\hat{\C}\setminus \{1/w_1\}$. A similar argument shows  that for each $z_1 \in K_r(z_0)$ there is a constant $\tilde{c}(z_1)$ such that $F_{z_1}(w):=F(z_1,w)=\tilde{c}(z_1)$ for all $w\in\hat{\C}\setminus\{1/z_1\}$. Notice that for $(z_1,w_1) \in U \cap \Omega$, we have
$c(w_1)=F(z_1,w_1)=\tilde{c}(z_1)$. This clearly implies that $F$ is constant in $U \cap \Omega$, and hence on $\Omega$.
\hide{
We even have $c(w_0)=\tilde{c}(z_0)\coloneqq c$ since $F_{w_0}(z_0)=F_{z_0}(w_0)$.

	Since $F$ is holomorphic in a neighbourhood $U$ of $(z_0,w_0)$ (or bounded in $U\cap\Omega$) and $\del\Omega$ is the graph $(z,1/z)$, we find $(z_1,w_1)\in U\cap\del\Omega$. As for $(z_0,w_0)$, we find that $F_{w_1}(z)=c_1(w_1)$ for all $z\in\hat{\C}$, $F_{z_1}(w)=F(z_1,w)=\tilde{c}_1(z_1)$ for all $w\in\hat{\C}$ and $c_1(w_1)=\tilde{c}_1(z_1)\coloneqq c_1$. By construction, we also find $(\tilde{z},\tilde{w}),(\hat{z},\hat{w})\in U\cap\Omega$ such that $c=F_{w_0}(\tilde{z})=F_{z_1}(\hat{w})=c_0$.

	Since $(z_1,w_1)$ was arbitrarily chosen, we conclude that $F$ is constant on $U$ and thus on $\Omega$ by the identity principle.} \hfill{$\square$}

\subsection*{Proof of Theorem \ref{thm:AutOmega}}
    We only prove the non--trivial implication ``(a) $\Longrightarrow$ (b)''. Note again that $O$ and $\C^2 \cap \Omega$ always have a nonempty intersection, i.e. we may work in the standard chart and assume $O \subseteq \C^2$. Let $T(z,w) = (T_1(z,w), T_2(z,w))$. Suppressing the arguments, we compute
    \begin{align*}
        T^* g_{\Omega}
        \at{(z,w)}
        &=
        \big(1 - T_1T_2\big)^{-2}
        d(T_1)
        \vee
        d(T_2) \\
        &=
        \big(1 - T_1T_2\big)^{-2}
        \big(
            \partial_z T_1 dz
            +
            \partial_w T_1 dw
        \big)
        \vee
        \big(
            \partial_z T_2 dz
            +
            \partial_w T_2 dw
        \big) \\
        &=
        \big(1 - T_1T_2\big)^{-2}
        \Big(
            \big(
                (\partial_z T_1) (\partial_w T_2)
                +
                (\partial_w T_1) (\partial_z T_2)
            \big)
            dz \vee dw \\
            &+
            (\partial_z T_1)(\partial_w T_1) dz^2
            +
            (\partial_z T_2) (\partial_w T_2) dw^2
        \Big).
    \end{align*}
    Comparing coefficients with \eqref{eq:MetricOmega} yields first that $(\partial_z T_1)(\partial_w T_1) = 0 = (\partial_z T_2) (\partial_w T_2)$. That is, each component of $T$ only depends on $z$ or $w$. If $T$ depended on $z$ only, then $T^* dw = 0$, which is absurd in view of $T^*g_\Omega = g_\Omega$. Consequently, we may assume $T_1(z,w) = T_1(z)$ and ${T_2(z,w) = T_2(w)}$. Making this more precise, we fix $(z_0, w_0) \in O$ and define the holomorphic auxiliary functions
    \begin{align*}
        &H_1
        \colon
        O_1
        =
        \big\{
            z \in \C
            \;\big|\;
            (z, w_0)
            \in
            O
        \big\}
        \longrightarrow
        \hat{\C}, \quad
        H_1(z)
        =
        T_1(z, w_0) \\
        \textrm{and} \quad
        &H_2
        \colon
        O_2
        =
        \big\{
            w \in \C
            \;\big|\;
            (z_0,w)
            \in O
        \big\}
        \longrightarrow
        \hat{\C}, \quad
        H_2(z)
        =
        T_2(z_0, w).
    \end{align*}
    As $O$ is open, the same is true for its nonempty projections $O_1$ and $O_2$. By construction, we have $H_1'(z) = \partial_z T_1(z,w_0)$ and $H_2'(w) = \partial_w T_2(z_0,w)$ for every $(z,w) \in O_1 \times O_2$. Comparing the remaining coefficients this way yields
    \begin{equation*}
        H_1'(z) H_2'(w)
        =
        \frac{\big(1 - H_1(z)H_2(w)\big)^{2}}{(1 - zw)^2},
        \qquad
        (z,w) \in O_1 \times O_2.
    \end{equation*}
    This is the $\Omega$--version of \eqref{eq:schwarzeq1}, hence
    Proposition \ref{prop:Invariance3} now implies $H_1,H_2 \in \Aut(\hat{\C})$ and ${H_1(z)=1/H_2(1/z)}$ for all $z \in \hat{\C}$, as both functions $H_1,H_2$ being constant would imply $T^*g_\Omega = 0$. By construction, this means
    \begin{equation*}
        \big(T_1(z,w), T_2(z,w)\big)
        =
        \big(H_1(z), H_2(w)\big)
        =
        \big(H_1(z), 1/H_1(1/w)\big)
    \end{equation*}
    for all $(z,w) \in \Omega$, that is, $T \in \mathcal{M}$, see again \eqref{eq:OmegaAutomorphismIntro}. \hfill{$\square$}

\section*{Acknowledgements}

The authors thank Daniela Kraus for useful discussions about Schwarzian derivatives and for suggesting the alternative approach for proving Proposition \ref{prop:Invariance3} which is outlined in Remark~\ref{rem:Moeb}(b).

\end{document}